\theoremstyle{plain}
\newtheorem{theorem}{Theorem}
\newtheorem{lemma}[theorem]{Lemma}
\newtheorem{corollary}[theorem]{Corollary}
\theoremstyle{remark}
\begin{document}

\title{Parrondo games with spatial dependence, II}
\author{S. N. Ethier\\
\begin{small}University of Utah\end{small}\\
\begin{small}Department of Mathematics\end{small}\\
\begin{small}155 S. 1400 E., JWB 233\end{small}\\
\begin{small}Salt Lake City, UT 84112, USA\end{small}\\
\begin{small}e-mail: \url{ethier@math.utah.edu}\end{small}
\and
Jiyeon Lee\\
\begin{small}Yeungnam University\end{small}\\
\begin{small}Department of Statistics\end{small}\\
\begin{small}214-1 Daedong, Kyeongsan\end{small}\\
\begin{small}Kyeongbuk 712-749, South Korea\end{small}\\
\begin{small}e-mail: \url{leejy@yu.ac.kr}\end{small}}
\date{}

\maketitle

\begin{abstract}
\noindent Let game $B$ be Toral's cooperative Parrondo game with (one-dimensional) spatial dependence, parameterized by $N\ge3$ and $p_0,p_1,p_2,p_3\in[0,1]$, and let game $A$ be the special case $p_0=p_1=p_2=p_3=1/2$.  In previous work we investigated $\mu_B$ and $\mu_{(1/2,1/2)}$,  the mean profits per turn to the ensemble of $N$ players always playing game $B$ and always playing the randomly mixed game $(1/2)(A+B)$.  These means were computable for $3\le N\le19$, at least, and appeared to converge as $N\to\infty$, suggesting that the Parrondo region (i.e., the region in which $\mu_B\le0$ and $\mu_{(1/2,1/2)}>0$) has nonzero volume in the limit.  The convergence was established under certain conditions, and the limits were expressed in terms of a parameterized spin system on the one-dimensional integer lattice.  In this paper we replace the random mixture with the nonrandom periodic pattern $A^r B^s$, where $r$ and $s$ are positive integers.  We show that $\mu_{[r,s]}$, the mean profit per turn to the ensemble of $N$ players repeatedly playing the pattern $A^r B^s$,
is computable for $3\le N\le 18$ and $r+s\le4$, at least, and appears to converge as $N\to\infty$, albeit more slowly than in the random-mixture case.  Again this suggests that the Parrondo region ($\mu_B\le0$ and $\mu_{[r,s]}>0$) has nonzero volume in the limit.  Moreover, we can prove this convergence under certain conditions and identify the limits.
\medskip\par
\noindent \textit{Keywords}: Parrondo's paradox; cooperative Parrondo games; Markov chain; stationary distribution; equivalence class; dihedral group; nonrandom periodic patterns; strong law of large numbers.
\end{abstract}

\section{Introduction}

Toral \cite{T01} introduced what he called \textit{cooperative Parrondo games}, in which there are $N\ge3$ players labeled from 1 to $N$ and arranged in a circle in clockwise order.  At each turn, one player is chosen at random to play.  Call him player $i$.  He plays either game $A$ or game $B$.  In game $A$ he tosses a fair coin.  In game $B$, he tosses a $p_0$-coin (i.e., $p_0$ is the probability of heads) if his neighbors $i-1$ and $i+1$ are both losers, a $p_1$-coin if $i-1$ is a loser and $i+1$ is a winner, a $p_2$-coin if $i-1$ is a winner and $i+1$ is a loser, and a $p_3$-coin if $i-1$ and $i+1$ are both winners.  (Because of the circular arrangement, player 0 is player $N$ and player $N+1$ is player 1.) A player's status as winner or loser depends on the result of his most recent game.  The player of either game wins one unit with heads and loses one unit with tails.  Under these assumptions, the model has an integer parameter, $N$, and four probability parameters, $p_0$, $p_1$, $p_2$, and $p_3$.  See Figure~\ref{diagram}.

\begin{figure}\label{diagram}
\caption{Toral's spatially dependent, or \textit{cooperative}, Parrondo games, with parameters $N\ge3$ and $p_0,p_1,p_2,p_3\in[0,1]$.  (A player's status as winner or loser depends on the result of his most recent game.  Players are labeled from 1 to $N$; player 0 is player $N$ and player $N+1$ is player 1.  A $p$-coin is one for which the probability of heads is $p$.)\medskip}
\setlength{\unitlength}{1.cm}
\begin{picture}(12,6.8)
\put(2.25,6){\framebox(1.5,.5){game $A$}}
\put(8.25,6){\framebox(1.5,.5){game $B$}}
\put(0.3,5){\framebox(5.4,.5){choose a player, say $i$, at random}}
\put(3,5){\vector(0,-1){.5}}
\put(6.3,5){\framebox(5.4,.5){choose a player, say $i$, at random}}
\put(9,5){\vector(0,-1){.5}}
\put(1.4,4){\framebox(3.2,.5){$i$ tosses a fair coin}}
\put(9,4){\vector(0,-1){.5}}
\put(6.2,4){\framebox(5.6,.5){check status of players $i-1$, $i+1$}}
\put(6.1,1.5){\framebox(5.8,2){\begin{minipage}{.44\linewidth}
{$\bullet$ loser, loser: $i$ tosses a $p_0$-coin\\
$\bullet$ loser, winner: $i$ tosses a $p_1$-coin\\
$\bullet$ winner, loser: $i$ tosses a $p_2$-coin\\
$\bullet$ winner, winner: $i$ tosses a $p_3$-coin}\end{minipage}}}
\put(3,4){\vector(-2,-3){1}}
\put(3,4){\vector(2,-3){1}}
\put(1.2,2){\framebox(1.6,.5){$i$ wins 1}}
\put(3.2,2){\framebox(1.6,.5){$i$ loses 1}}
\put(9,1.5){\vector(-2,-3){1}}
\put(9,1.5){\vector(2,-3){1}}
\put(7.2,-.5){\framebox(1.6,.5){$i$ wins 1}}
\put(9.2,-.5){\framebox(1.6,.5){$i$ loses 1}}
\put(1.4,3.2){heads}
\put(3.7,3.2){tails}
\put(7.4,.7){heads}
\put(9.7,.7){tails}
\end{picture}
\vglue5mm
\end{figure}
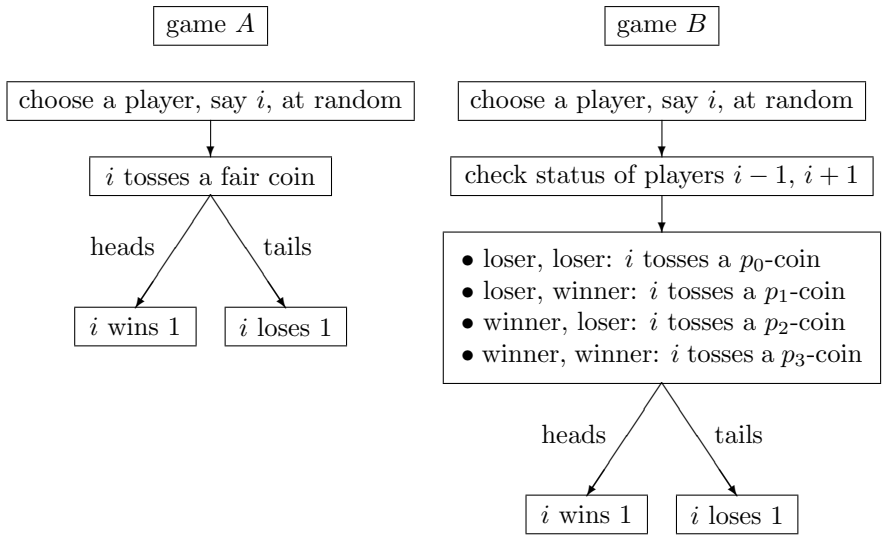

Toral used computer simulation to show that, with $N=50$, 100, or 200, $p_0=1$, $p_1=p_2=4/25$, and $p_3=7/10$, game $A$ is fair, game $B$ is losing, but both the random mixture $(1/2)(A+B)$ (toss a fair coin, play game $A$ if heads, game $B$ if tails) and the nonrandom periodic pattern $AABB$ (i.e., $AABBAABBAABB\cdots$), also denoted by $[2,2]$, are winning, providing new examples of \textit{Parrondo's paradox} (Harmer and Abbott \cite{HA02}, Abbott \cite{A10}).  Ethier and Lee \cite{EL12b} investigated $\mu_B$ and $\mu_{(1/2,1/2)}$, the mean profits per turn to the ensemble of $N$ players always playing game $B$ and always playing the randomly mixed game $(1/2)(A+B)$. These means were computable for $3\le N\le19$, at least, and appeared to converge as $N\to\infty$, suggesting that the Parrondo region (i.e., the region in which $\mu_B\le0$ and $\mu_{(1/2,1/2)}>0$) has nonzero volume in the limit.  In \cite{EL12c} this convergence was established under certain conditions on $p_0,p_1,p_2,p_3$, and the limits were expressed in terms of a parameterized spin system on the one-dimensional integer lattice.  In addition, assuming $p_1=p_2$, explicit formulas for the means were derived for $N=3,4,5,6$, and this allowed us to visualize the Parrondo region in these cases.  Incidentally, we note that the results can easily be extended to the more general random mixture $\gamma A+(1-\gamma)B$, in which case the mean profit per turn is denoted by $\mu_{(\gamma,1-\gamma)}$.

In this paper we replace the random mixture with the nonrandom periodic pattern $A^r B^s$, also denoted by $[r,s]$, where $r$ and $s$ are positive integers.  We show that $\mu_{[r,s]}$, the mean profit per turn to the ensemble of $N$ players repeatedly playing the pattern $A^r B^s$, is computable for $3\le N\le 18$ and $r+s\le4$, at least, and appears to converge as $N\to\infty$, albeit more slowly than in the random-mixture case.  Again this suggests that the Parrondo region ($\mu_B\le0$ and $\mu_{[r,s]}>0$) has nonzero volume in the limit.  Elsewhere \cite{EL12d} we establish this convergence under certain conditions on $p_0,p_1,p_2,p_3$ (see Appendix A herein for the conditions), and show that the limit as $N\to\infty$ coincides with $\lim_{N\to\infty}\mu_{(\gamma,1-\gamma)}$, where $\gamma:=r/(r+s)$.  In addition, assuming $p_1=p_2$, explicit formulas for $\mu_{[r,s]}$ can be derived for $N=3,4,5$ and $r+s\le3$, and this allows us to visualize the Parrondo region in these cases.

Nonrandom patterns of the form $A^rB^s$ have been part of the Parrondo paradox literature from its inception (Harmer and Abbott \cite{HA99}).  The reason is clear.  The flashing Brownian ratchet (Ajdari and Prost \cite{AP92}), which motivated the formulation of Parrondo's games, is most closely related to the pattern $AB$ (i.e., $ABABAB\cdots$).  But this pattern is not winning for the original capital-dependent games, so Parrondo considered $AABB$ instead.  Analysis of these patterns was initially based on simulation.  Analytical formulas for the mean profit per turn were found by Ekhad and Zeilberger \cite{EZ00} and Velleman and Wagon \cite{VW01}, and in the history-dependent setting by Kay and Johnson \cite{KJ03}.  Behrends \cite{B04} generalized Parrondo's games in such a way that the random-mixture case and the nonrandom-pattern case could be studied simultaneously, and he too found an expression for the mean profit per turn.  Ethier and Lee \cite{EL09} proved a strong law of large numbers and showed that the Parrondo effect appears (in the capital-dependent setting with the so-called bias parameter equal to 0) for the pattern $A^r B^s$ for all integers $r,s\ge1$ except $r=s=1$.  The problem of finding the most profitable pattern (not restricted to patterns of the form $A^r B^s$) was investigated in \cite{EZ00} and \cite{VW01} and eventually solved by Dinis \cite{D08}, the winning pattern being $ABABB$ (and its cyclic permutations) in the case of Parrondo's original parameters.  In Toral's \cite{T02} nonspatial $N$-player model with redistribution of wealth, Ethier and Lee \cite{EL12a} showed that the mean profit per turn for the pattern $A^r B^s$ converges, as $N\to\infty$, to the mean profit per turn for the random mixture $\gamma A+(1-\gamma)B$ with $\gamma:=r/(r+s)$, which does not depend on $N$.  This foreshadowed the results of the present paper and \cite{EL12d}.

\section{The Markov Chain and its Reduction}\label{MC}

The Markov chain formalized by Mihailovi\'c and Rajkovi\'c \cite{MR03} keeps track of the status (loser or winner, 0 or 1) of each of the $N\ge3$ players. Its state space is the product space
$$
\Sigma:=\{\bm x=(x_1,x_2,\ldots,x_N): x_i\in\{0,1\}{\rm\ for\ }i=1,\ldots,N\}=\{0,1\}^N
$$
with $2^N$ states.  Let $m_i(\bm x):=2x_{i-1}+x_{i+1}$; in other words, $m_i(\bm x)$ is the integer (0, 1, 2, or 3) whose binary representation is $(x_{i-1}\,x_{i+1})_2$;  of course $x_0:=x_N$ and $x_{N+1}:=x_1$.  Also, let $\bm x^i$ be the element of $\Sigma$ equal to $\bm x$ except at the $i$th component; for example, $\bm x^1:=(1-x_1,x_2,x_3,\ldots,x_N)$.

The one-step transition matrix $\bm P_B$ for this Markov chain depends not only on $N$ but on the four probability parameters $p_0$, $p_1$, $p_2$, and $p_3$, which we assume satisfy $0<p_m<1$ for $m=0,1,2,3$.  (This rules out Toral's choice of the probability parameters, at least for now, but we will return to this point in Sec.~\ref{reducible}.  We do not assume that $p_1=p_2$ until Sec.~\ref{region}.)  It has the form
\begin{equation}\label{x to x^i}
P_B(\bm x,\bm x^i):=\begin{cases}N^{-1}p_{m_i(\bm x)}&\text{if $x_i=0$,}\\N^{-1}q_{m_i(\bm x)}&\text{if $x_i=1$,}\end{cases}\qquad i=1,\ldots,N,\;\bm x\in\Sigma,
\end{equation}
\begin{equation}\label{x to x}
P_B(\bm x,\bm x):=N^{-1}\bigg(\sum_{i:x_i=0}q_{m_i(\bm x)}+\sum_{i:x_i=1}p_{m_i(\bm x)}\bigg),\qquad \bm x\in\Sigma,
\end{equation}
where $q_m:=1-p_m$ for $m=0,1,2,3$ and empty sums are 0, and $P_B(\bm x,\bm y)=0$ otherwise.

The description of the model suggests that its long-term behavior should be invariant under rotation (and, if $p_1=p_2$, reflection) of the $N$ players.  In order to maximize the value of $N$ for which exact computations are feasible, we will use the following lemma from \cite{EL12b} to make this precise.

\begin{lemma}\label{invariant}
Fix $N\ge3$, let $G$ be a subgroup of the symmetric group $S_N$.  For $\bm x:=(x_1,\ldots,x_N)\in\Sigma:=\{0,1\}^N$ and $\sigma\in G$, write $\bm x_\sigma:=(x_{\sigma(1)},\ldots,x_{\sigma(N)})\in\Sigma$.  Let $\bm P$ be the one-step transition matrix for a Markov chain in $\Sigma$ with a unique stationary distribution $\bm\pi$.  Assume that
\begin{equation}\label{G-invariance}
P(\bm x_\sigma,\bm y_\sigma)=P(\bm x,\bm y),\qquad \sigma\in G,\;\bm x,\bm y\in\Sigma.
\end{equation}
Then $\pi(\bm x_\sigma)=\pi(\bm x)$ for all $\sigma\in G$ and $\bm x\in\Sigma$.

Let us say that $\bm x\in\Sigma$ is equivalent to $\bm y\in\Sigma$ (written $\bm x\sim\bm y$) if there exists $\sigma\in G$ such that $\bm y=\bm x_\sigma$, and let us denote the equivalence class containing $\bm x$ by $[\bm x]$.  Then, in addition, $\bm P$ induces a one-step transition matrix $\bar{\bm P}$ for a Markov chain in the quotient set (i.e., the set of equivalence classes) $\Sigma/$$\sim$ defined by the formula
\begin{equation}\label{Pbar-def}
\bar{P}([\bm x],[\bm y]):=\sum_{\bm y':\bm y'\sim\bm y}P(\bm x,\bm y')=\sum_{\sigma\in G:\bm y_\sigma\;{\rm distinct}}P(\bm x,\bm y_\sigma),
\end{equation}
the second sum extending over only those $\sigma\in G$ for which the various $\bm y_\sigma$ are distinct.  Furthermore, if $\bar{\bm P}$ has a unique stationary distribution $\bar{\bm\pi}$, then the unique stationary distribution $\bm\pi$ is given by $\pi(\bm x)=\bar{\pi}([\bm x])/|[\bm x]|$, where $|[\bm x]|$ denotes the cardinality of the equivalence class $[\bm x]$.
\end{lemma}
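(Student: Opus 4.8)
The plan is to prove the three assertions in turn, in each case exploiting the uniqueness of the relevant stationary distribution: a push-forward (or lift) of a stationary distribution is again stationary, hence must coincide with the unique one.

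For the first assertion, fix $\sigma\in G$ and define $\pi_\sigma(\bm x):=\pi(\bm x_\sigma)$. I would show $\pi_\sigma$ is a stationary distribution for $\bm P$. Writing the balance sum and inserting the $G$-invariance \eqref{G-invariance} in the form $P(\bm x,\bm y)=P(\bm x_\sigma,\bm y_\sigma)$, one gets $\sum_{\bm x}\pi(\bm x_\sigma)P(\bm x,\bm y)=\sum_{\bm x}\pi(\bm x_\sigma)P(\bm x_\sigma,\bm y_\sigma)$; since $\bm x\mapsto\bm x_\sigma$ is a bijection of $\Sigma$ (with inverse $\bm x\mapsto\bm x_{\sigma^{-1}}$, because $(\bm x_\sigma)_{\sigma^{-1}}=\bm x_{\sigma\sigma^{-1}}=\bm x$), the substitution $\bm u=\bm x_\sigma$ turns this into $\sum_{\bm u}\pi(\bm u)P(\bm u,\bm y_\sigma)=\pi(\bm y_\sigma)=\pi_\sigma(\bm y)$ by stationarity of $\bm\pi$. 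Thus $\pi_\sigma$ is stationary, and uniqueness forces $\pi_\sigma=\bm\pi$, i.e.\ $\pi(\bm x_\sigma)=\pi(\bm x)$.

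For the second assertion, the only issue is that \eqref{Pbar-def} defines $\bar P([\bm x],[\bm y])$ through a particular representative $\bm x$; since the inner sum already ranges over the whole class of $\bm y$, independence of the choice of $\bm y$ is automatic, so I would only check independence of $\bm x$. Replacing $\bm x$ by $\bm x_\sigma$ and again using $G$-invariance gives $P(\bm x_\sigma,\bm y')=P(\bm x,\bm y'_{\sigma^{-1}})$, and as $\bm y'$ runs over $[\bm y]$ so does $\bm y'_{\sigma^{-1}}$ (the map is a bijection of $[\bm y]$ onto itself), so the sum is unchanged. The second form in \eqref{Pbar-def} is just this sum rewritten as $\sigma$ ranges over $G$, since $\{\bm y_\sigma:\sigma\in G\}=[\bm y]$. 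That $\bar{\bm P}$ is stochastic follows because the classes partition $\Sigma$: $\sum_{[\bm y]}\bar P([\bm x],[\bm y])=\sum_{\bm y'}P(\bm x,\bm y')=1$.

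For the third assertion, rather than pushing $\bar{\bm\pi}$ down I would lift $\bm\pi$ up. By the first assertion $\pi$ is constant on each class, so $\nu([\bm x]):=|[\bm x]|\,\pi(\bm x)=\sum_{\bm x'\sim\bm x}\pi(\bm x')$ is well defined and sums to $1$. I would verify $\nu$ is stationary for $\bar{\bm P}$: inserting the definitions and using that $\pi$ and $\bar P(\cdot,[\bm y])$ are constant on each class to collapse the outer sum over classes into a sum over all of $\Sigma$,
$$\sum_{[\bm x]}\nu([\bm x])\,\bar P([\bm x],[\bm y])=\sum_{\bm x'}\pi(\bm x')\sum_{\bm y'\sim\bm y}P(\bm x',\bm y')=\sum_{\bm y'\sim\bm y}\pi(\bm y')=\nu([\bm y]),$$
where the last two equalities interchange the finite sums and apply stationarity of $\bm\pi$. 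Uniqueness of $\bar{\bm\pi}$ then gives $\nu=\bar{\bm\pi}$, i.e.\ $\pi(\bm x)=\bar\pi([\bm x])/|[\bm x]|$. I expect the main obstacle to be purely organizational: keeping the convention $(\bm x_\sigma)_\tau=\bm x_{\sigma\tau}$ straight so that the substitution $\bm x\mapsto\bm x_\sigma$ and the reindexing of the class $[\bm y]$ are genuine bijections, and, in the third part, justifying the interchange of summation that collapses the sum over classes into a sum over all of $\Sigma$. None of these steps is deep, but a sign error in the group action would propagate through all three parts.
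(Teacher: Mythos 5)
Your proposal is correct, and all three uniqueness-based steps check out, including the right-action convention $(\bm x_\sigma)_\tau=\bm x_{\sigma\tau}$ that makes the substitutions and reindexings genuine bijections. One point of comparison worth noting: this paper does not prove Lemma~\ref{invariant} at all (it imports it from \cite{EL12b}), and your argument --- push the unique $\bm\pi$ forward under the action and invoke uniqueness, verify representative-independence (i.e.\ lumpability) and stochasticity of $\bar{\bm P}$, then lift $\bm\pi$ to a stationary distribution of $\bar{\bm P}$ and invoke uniqueness of $\bar{\bm\pi}$ --- is precisely the standard route, so it supplies a complete, self-contained proof of what the paper only cites.
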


As we saw in \cite{EL12b}, Lemma~\ref{invariant} applies to $\bm P_B$ of (\ref{x to x^i}) and (\ref{x to x}) with $G$ being the subgroup of cyclic permutations (or rotations) of $(1,2,\ldots,N)$.  If $p_1=p_2$, then Lemma~\ref{invariant} also applies to $\bm P_B$ with $G$ being the subgroup generated by the cyclic permutations and the order-reversing permutation (rotations and/or reflections) of $(1,2,\ldots,N)$, the dihedral group of order $2N$.  The resulting one-step transition matrix $\bar{\bm P}_B$ has smaller dimension than $\bm P_B$ (see Table~1 of \cite{EL12b}), thereby making matrix computations feasible for larger $N$.

If $p_0=p_1=p_2=p_3=1/2$, we denote $\bm P_B$ by $\bm P_A$.  Of course the same conclusions apply to $\bm P_A$.  We would like to argue that they also apply to $\bm P_A^r\bm P_B^s$ for each $r,s\ge1$.  For this we need to show that the mapping $\bm P\mapsto\bar{\bm P}$ has a certain multiplicative property.

Given a subgroup $G$ of $S_N$, let us say that a square matrix $\bm P$ (not necessarily stochastic) with rows and columns indexed by $\Sigma$ is \textit{$G$-invariant} if (\ref{G-invariance}) holds.  If $\bm P$ is $G$-invariant, then we can define $\bar{\bm P}$ as in (\ref{Pbar-def}).

\begin{lemma}\label{multiplication}
Let $G$ be a subgroup of $S_N$.  If the square matrices $\bm P_1$ and $\bm P_2$ are $G$-invariant, then $\bm P_1\bm P_2$ is $G$-invariant, and $\overline{\bm P_1\bm P_2}=\bar{\bm P}_1 \bar{\bm P}_2$.
\end{lemma}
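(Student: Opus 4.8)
The plan is to derive both assertions directly from the definitions, the only structural input being that for each $\sigma\in G$ the map $\bm w\mapsto\bm w_\sigma$ is a bijection of $\Sigma$ onto itself. For the first assertion I would compute $(\bm P_1\bm P_2)(\bm x_\sigma,\bm y_\sigma)=\sum_{\bm z\in\Sigma}P_1(\bm x_\sigma,\bm z)\,P_2(\bm z,\bm y_\sigma)$ and reindex by writing $\bm z=\bm w_\sigma$. Since $\bm w\mapsto\bm w_\sigma$ is a bijection, $\bm w$ ranges over all of $\Sigma$, and the $G$-invariance of $\bm P_1$ and $\bm P_2$ gives $P_1(\bm x_\sigma,\bm w_\sigma)=P_1(\bm x,\bm w)$ and $P_2(\bm w_\sigma,\bm y_\sigma)=P_2(\bm w,\bm y)$. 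Hence the sum equals $\sum_{\bm w}P_1(\bm x,\bm w)\,P_2(\bm w,\bm y)=(\bm P_1\bm P_2)(\bm x,\bm y)$, so $\bm P_1\bm P_2$ is $G$-invariant and $\overline{\bm P_1\bm P_2}$ is defined.

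For the second assertion I would first rewrite $\bar P([\bm x],[\bm y])=\sum_{\bm y'\in[\bm y]}P(\bm x,\bm y')$, the sum extending over the distinct elements of the orbit of $\bm y$ (the two formulas in (\ref{Pbar-def}) being equal). The key preliminary fact concerns $\bar{\bm P}_2$: for any representative $\bm z'$ of $[\bm z]$, the inner sum $\sum_{\bm y'\in[\bm y]}P_2(\bm z',\bm y')$ is independent of the choice of $\bm z'$. Indeed, writing $\bm z'=\bm z_\tau$ and substituting $\bm y'=\bm w_\tau$---which permutes the orbit $[\bm y]$ because $[\bm y_\tau]=[\bm y]$---the $G$-invariance of $\bm P_2$ collapses the sum to $\sum_{\bm w\in[\bm y]}P_2(\bm z,\bm w)$. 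Thus $S([\bm z]):=\sum_{\bm y'\in[\bm y]}P_2(\bm z,\bm y')$ depends only on the class $[\bm z]$, not on its representative.

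The main computation then combines this with the fact that the equivalence classes partition $\Sigma$. Starting from $(\bar{\bm P}_1\bar{\bm P}_2)([\bm x],[\bm y])=\sum_{[\bm z]}\bar P_1([\bm x],[\bm z])\,S([\bm z])$, I would expand $\bar P_1([\bm x],[\bm z])=\sum_{\bm z'\in[\bm z]}P_1(\bm x,\bm z')$ and use the representative-independence of $S$ to replace the constant factor $S([\bm z])$ by $S(\bm z')$ under the inner sum, obtaining $\sum_{[\bm z]}\sum_{\bm z'\in[\bm z]}P_1(\bm x,\bm z')\,S(\bm z')$. Since the classes $[\bm z]$ partition $\Sigma$, this double sum collapses to $\sum_{\bm z'\in\Sigma}P_1(\bm x,\bm z')\sum_{\bm y'\in[\bm y]}P_2(\bm z',\bm y')$; interchanging the two summations yields $\sum_{\bm y'\in[\bm y]}(\bm P_1\bm P_2)(\bm x,\bm y')=\overline{\bm P_1\bm P_2}([\bm x],[\bm y])$, which is the claim.

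I expect the crux to be the bookkeeping in the composition $\bar{\bm P}_1\bar{\bm P}_2$: the intermediate class $[\bm z]$ enters asymmetrically---it is summed over its whole orbit as a \emph{target} in the $\bar{\bm P}_1$ factor, yet appears as a single fixed \emph{source} representative in the $\bar{\bm P}_2$ factor. The device reconciling the two is precisely the representative-independence of $S([\bm z])$, which lets the source be spread across the orbit and so reassembles a genuine sum over all of $\Sigma$. Getting this matching right (and, implicitly, confirming that $\bar{\bm P}$ is well defined in the first place) is the only step requiring care; everything else is reindexing and interchange of summation.
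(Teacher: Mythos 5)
Your proof is correct and takes essentially the same approach as the paper: the first assertion via reindexing the sum by the bijection $\bm z\mapsto\bm z_\sigma$ and invoking $G$-invariance, and the second via the same expand--interchange--regroup-by-orbit computation, merely run in the reverse direction (from $\bar{\bm P}_1\bar{\bm P}_2$ to $\overline{\bm P_1\bm P_2}$ instead of the other way). The only difference is presentational: you make explicit the representative-independence of $\sum_{\bm y'\sim\bm y}P_2(\bm z,\bm y')$ in $\bm z$, which the paper uses implicitly when it identifies $\sum_{\bm z':\bm z'\sim\bm z}P_1(\bm x,\bm z')\bar P_2([\bm z'],[\bm y])$ with $\bar P_1([\bm x],[\bm z])\bar P_2([\bm z],[\bm y])$.
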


\begin{proof}
The first assertion follows from the identity
\begin{eqnarray*}
[\bm P_1\bm P_2](\bm x_\sigma,\bm y_\sigma)&=&\sum_{\bm z\in\Sigma}P_1(\bm x_\sigma,\bm z)P_2(\bm z,\bm y_\sigma)=\sum_{\bm z\in\Sigma}P_1(\bm x_\sigma,\bm z_\sigma)P_2(\bm z_\sigma,\bm y_\sigma)\\
&=&\sum_{\bm z\in \Sigma}P_1(\bm x,\bm z)P_2(\bm z,\bm y)=[\bm P_1 \bm P_2](\bm x,\bm y).
\end{eqnarray*}
The second is a consequence of
\begin{eqnarray*}
[\overline{\bm P_1\bm P_2}]([\bm x],[\bm y])&=&\sum_{\bm y':\bm y'\sim\bm y}[\bm P_1\bm P_2](\bm x,\bm y')\\
&=&\sum_{\bm y':\bm y'\sim\bm y}\sum_{\bm z}P_1(\bm x,\bm z)P_2(\bm z,\bm y')\\
&=&\sum_{\bm z}P_1(\bm x,\bm z)\sum_{\bm y':\bm y'\sim\bm y}P_2(\bm z,\bm y')\\
&=&\sum_{\bm z}P_1(\bm x,\bm z)\bar{P}_2([\bm z],[\bm y])\\
&=&\sum_{[\bm z]}\sum_{\bm z':\bm z'\sim\bm z}P_1(\bm x,\bm z')\bar{P}_2([\bm z'],[\bm y])\\
&=&\sum_{[\bm z]}\bar{P}_1([\bm x],[\bm z])\bar{P}_2([\bm z],[\bm y])\\
&=&[\bar{\bm P}_1 \bar{\bm P}_2]([\bm x],[\bm y]),
\end{eqnarray*}
so the proof is complete.
\end{proof}

With $G$ being the subgroup of cyclic permutations or (if $p_1=p_2$) the dihedral group of order $2N$, $\bm P_A$ and $\bm P_B$ are of course $G$-invariant.  Consequently, for each $r,s\ge1$, Lemma~\ref{multiplication} tells us that $\bm P_A^r\bm P_B^s$ is $G$-invariant as well.  This fact will be needed in the next section.

\section{Strong Law of Large Numbers}
We will need the following version of the SLLN.

\begin{theorem}[Ethier and Lee \cite{EL09}]\label{slln}
Let $\bm P_A$ and $\bm P_B$ be one-step transition matrices for Markov chains in a finite state space $\Sigma_0$.  Fix $r,s\ge1$.  Assume that $\bm P:=\bm P_A^r\bm P_B^s$, as well as all cyclic permutations of $\bm P_A^r\bm P_B^s$, are irreducible and aperiodic, and let the row vector $\bm\pi$ be the unique stationary distribution of $\bm P$.  Given a real-valued function $w$ on $\Sigma_0\times\Sigma_0$,  define the payoff matrix $\bm W:=(w(i,j))_{i,j\in\Sigma_0}$.  Define $\dot{\bm P}_A:=\bm P_A\circ\bm W$ and $\dot{\bm P}_B:=\bm P_B\circ\bm W$, where $\circ$ denotes the Hadamard (entrywise) product, and put
$$
\mu_{[r,s]}:={1\over r+s}\bigg[\sum_{u=0}^{r-1}\bm\pi\bm P_A^u\dot{\bm P}_A\bm1+\sum_{v=0}^{s-1}\bm\pi\bm P_A^r\bm P_B^v\dot{\bm P}_B\bm1\bigg],
$$
where $\bm1$ denotes a column vector of $1$s with entries indexed by $\Sigma_0$.  Let $\{X_n\}_{n\ge0}$ be a nonhomogeneous Markov chain in $\Sigma_0$ with one-step transition matrices $\bm P_A,\ldots,\bm P_A$ $(r\text{ times})$, $\bm P_B,\ldots,\bm P_B$ $(s\text{ times})$, $\bm P_A,\ldots,\bm P_A$ $(r\text{ times})$, $\bm P_B,\ldots,\bm P_B$ $(s\text{ times})$, and so on, and let the initial distribution be arbitrary.  For each $n\ge1$, define $\xi_n:=w(X_{n-1},X_n)$ and $S_n:=\xi_1+\cdots+\xi_n$.  Then $\lim_{n\to\infty}n^{-1}S_n=\mu_{[r,s]}$ \emph{a.s.}
\end{theorem}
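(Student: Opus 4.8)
The plan is to exploit the fact that, although $\{X_n\}$ is nonhomogeneous, it is periodic with period $c:=r+s$, so that for each phase $\ell\in\{0,1,\ldots,c-1\}$ the subsampled process $Y^{(\ell)}_k:=X_{kc+\ell}$, $k\ge0$, is a \emph{homogeneous} Markov chain. Its one-step transition matrix is the cyclic permutation of $\bm P_A^r\bm P_B^s$ obtained by starting the product at phase $\ell$; for example $\bm Q_0=\bm P_A^r\bm P_B^s$ and $\bm Q_1=\bm P_A^{r-1}\bm P_B^s\bm P_A$. By hypothesis each $\bm Q_\ell$ is irreducible and aperiodic, hence ergodic with a unique stationary distribution $\bm\pi_\ell$; this is precisely where the assumption on \emph{all} cyclic permutations enters. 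I would first reduce the claim to convergence along the subsequence $n=mc$: since $\Sigma_0$ is finite and $w$ is bounded, $|S_n-S_{mc}|\le(c-1)\max_{i,j}|w(i,j)|$ whenever $mc\le n<(m+1)c$, while $mc/n\to1$, so $n^{-1}S_n$ and $(mc)^{-1}S_{mc}$ must have the same almost sure limit.

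Next I would decompose the block sum by phase. Writing $S_{mc}=\sum_{\ell=0}^{c-1}\sum_{k=0}^{m-1}\xi_{kc+\ell+1}$, I note that the increment $\xi_{kc+\ell+1}=w(X_{kc+\ell},X_{kc+\ell+1})$ is generated by a single application of $\bm P_A$ (if $0\le\ell<r$) or $\bm P_B$ (if $r\le\ell<c$) to $Y^{(\ell)}_k$. To bring this under the ergodic theorem I would pass to the window chain $\{(X_{kc+\ell},X_{kc+\ell+1})\}_{k\ge0}$, which is again a homogeneous, irreducible Markov chain whose stationary law has first marginal $\bm\pi_\ell$ and conditional second coordinate given by one step of the relevant matrix. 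The finite-state Markov ergodic theorem then yields, almost surely,
$$
\frac1m\sum_{k=0}^{m-1}\xi_{kc+\ell+1}\longrightarrow\begin{cases}\bm\pi_\ell\dot{\bm P}_A\bm1,&0\le\ell<r,\\ \bm\pi_\ell\dot{\bm P}_B\bm1,&r\le\ell<c,\end{cases}
$$
since $\sum_{x,y}\pi_\ell(x)P_A(x,y)w(x,y)=\bm\pi_\ell(\bm P_A\circ\bm W)\bm1=\bm\pi_\ell\dot{\bm P}_A\bm1$, and likewise for $\bm P_B$. (Alternatively one applies the ergodic theorem to the function $x\mapsto(\dot{\bm P}_A\bm1)(x)$ of $Y^{(\ell)}_k$ and absorbs the difference $\xi_{kc+\ell+1}-\E[\xi_{kc+\ell+1}\mid Y^{(\ell)}_k]$, a bounded martingale-difference average that tends to $0$ almost surely.)

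The remaining, and most delicate, step is to identify $\bm\pi_\ell$ explicitly and to check that the phase limits reassemble into $\mu_{[r,s]}$. Here I would prove that $\bm\pi_\ell=\bm\pi\,\bm M_0\bm M_1\cdots\bm M_{\ell-1}$, where $\bm M_0,\ldots,\bm M_{c-1}$ denotes the list $\bm P_A$ ($r$ times), $\bm P_B$ ($s$ times). Indeed, the products starting at consecutive phases satisfy the intertwining relation $\bm M_\ell\bm Q_{\ell+1}=\bm Q_\ell\bm M_\ell$, so if $\bm\pi_\ell\bm Q_\ell=\bm\pi_\ell$ then $(\bm\pi_\ell\bm M_\ell)\bm Q_{\ell+1}=\bm\pi_\ell\bm Q_\ell\bm M_\ell=\bm\pi_\ell\bm M_\ell$, and uniqueness (from irreducibility of $\bm Q_{\ell+1}$) forces $\bm\pi_{\ell+1}=\bm\pi_\ell\bm M_\ell$. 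Starting from $\bm\pi_0=\bm\pi$ this gives $\bm\pi_\ell=\bm\pi\bm P_A^\ell$ for $0\le\ell\le r$ and $\bm\pi_{r+v}=\bm\pi\bm P_A^r\bm P_B^{v}$ for $0\le v\le s$. Substituting these into the phase limits and summing over $\ell$ produces exactly
$$
\sum_{u=0}^{r-1}\bm\pi\bm P_A^u\dot{\bm P}_A\bm1+\sum_{v=0}^{s-1}\bm\pi\bm P_A^r\bm P_B^v\dot{\bm P}_B\bm1=(r+s)\,\mu_{[r,s]},
$$
whence $(mc)^{-1}S_{mc}\to\mu_{[r,s]}$ and, by the first step, $n^{-1}S_n\to\mu_{[r,s]}$ almost surely. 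The main obstacle is exactly this bookkeeping of the cyclically shifted stationary distributions, combined with the passage from per-transition payoffs to a genuine additive functional of an ergodic chain; once those are in hand, the formula for $\mu_{[r,s]}$ falls out by linearity.
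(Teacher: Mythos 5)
The paper itself contains no proof of this theorem: it is imported verbatim from Ethier and Lee \cite{EL09} (hence the attribution in the theorem heading), so there is no internal argument to compare yours against, and your proposal must be judged on its own merits. So judged, it is correct. The three ingredients---(i) reduction to the subsequence $n=mc$, $c:=r+s$, using boundedness of $w$; (ii) the decomposition of $S_{mc}$ by phase together with a law of large numbers for each phase; (iii) the identification $\bm\pi_\ell=\bm\pi\bm M_0\cdots\bm M_{\ell-1}$ via the intertwining $\bm M_\ell\bm Q_{\ell+1}=\bm Q_\ell\bm M_\ell$---are all sound, and (iii) is exactly the right bookkeeping device; note that it also shows uniqueness of each $\bm\pi_\ell$ is automatic once $\bm\pi$ is unique, since any stationary $\bm\nu$ for $\bm Q_{\ell}$ pushes forward to a stationary distribution for $\bm Q_0$ and back. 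Your argument also makes transparent where the hypothesis on \emph{all} cyclic permutations enters (the per-phase LLN must have a deterministic limit independent of the initial state), and it shows that irreducibility plus aperiodicity could be weakened to ergodicity of each cyclic permutation, which is precisely the relaxation this paper invokes in Section~\ref{reducible}.

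One step is asserted more casually than it deserves: the claim that the window chain $\{(X_{kc+\ell},X_{kc+\ell+1})\}_{k\ge0}$ is ``again a homogeneous, irreducible Markov chain.'' It is certainly not irreducible on all of $\Sigma_0\times\Sigma_0$; it must be restricted to pairs $(x,y)$ with $M_\ell(x,y)>0$, and even there irreducibility requires a short argument: writing $\bm R:=\bm M_{\ell+1}\cdots\bm M_{\ell+c-1}$, the $k$-step transition probability from $(x,y)$ to $(x',y')$ equals $[\bm Q_{\ell+1}^{\,k-1}\bm R](y,x')\,M_\ell(x',y')$, which is positive for large $k$ because $\bm Q_{\ell+1}$ is primitive (irreducible and aperiodic) and no column of $\bm R$ can vanish identically (else $\bm Q_\ell=\bm M_\ell\bm R$ would have a zero column, contradicting its irreducibility). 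Your parenthetical alternative---apply the ergodic theorem to the function $x\mapsto(\dot{\bm P}_A\bm 1)(x)$ (or $(\dot{\bm P}_B\bm 1)(x)$) of $Y^{(\ell)}_k$ and absorb $\xi_{kc+\ell+1}-\E[\xi_{kc+\ell+1}\mid X_{kc+\ell}]$ as a bounded martingale-difference average tending to $0$ a.s.---avoids this issue entirely; I would promote it from a parenthesis to the main line of the proof.
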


Here ``a.s.'' stands for ``almost surely,'' meaning ``with probability 1.''

Our original Markov chain has state space $\Sigma:=\{0,1\}^N$ and its one-step transition matrix $\bm P_B$ is given by (\ref{x to x^i}) and (\ref{x to x}), but Theorem~\ref{slln} does not apply directly.  Instead, we augment the state space, letting $\Sigma^*:=\Sigma\times \{1,2,\ldots,N\}$ and keeping track not only of the status of each player as described by $\bm x\in\Sigma$ but also of the label of the next player to play, say $i$.  The new one-step transition matrix $\bm P_B^*$ has the form
$$
P_B^*((\bm x,i),(\bm x^i,j)):=\begin{cases}N^{-1}p_{m_i(\bm x)}&\text{if $x_i=0$,}\\N^{-1}q_{m_i(\bm x)}&\text{if $x_i=1$,}\end{cases}\qquad (\bm x,i)\in\Sigma^*,\; j=1,2,\ldots,N,
$$
$$
P_B^*((\bm x,i),(\bm x,j)):=\begin{cases}N^{-1}q_{m_i(\bm x)}&\text{if $x_i=0$,}\\N^{-1}p_{m_i(\bm x)}&\text{if $x_i=1$,}\end{cases}\qquad (\bm x,i)\in\Sigma^*,\; j=1,2,\ldots,N,
$$
where $q_m:=1-p_m$ for $m=0,1,2,3$, and $P_B^*((\bm x,i),(\bm y,j))=0$ otherwise.  If $p_0=p_1=p_2=p_3=1/2$, we denote $\bm P_B^*$ by $\bm P_A^*$.  The one-step transition matrix $\bm P^*:=(\bm P_A^*)^r(\bm P_B^*)^s$, as well as all cyclic permutations of $(\bm P_A^*)^r(\bm P_B^*)^s$, are irreducible and aperiodic, so let $\bm\pi^*$ be the unique stationary distribution of $\bm P^*$.  The payoff matrix $\bm W$ now has each nonzero entry equal to $\pm1$, so Theorem~\ref{slln} applies.  Since $\dot{\bm P}_A^*\bm1=\bm0$, we have
\begin{equation}\label{mu via P*}
\mu_{[r,s]}={1\over r+s}\sum_{v=0}^{s-1}\bm\pi^*(\bm P_A^*)^r(\bm P_B^*)^v\dot{\bm P}_B^*\bm1,
\end{equation}
where $\bm1$ denotes a column vector of $1$s with entries indexed by $\Sigma^*$.
Here $\dot{\bm P}_B^*$ is the Hadamard (entrywise) product ${\bm P}_B^*\circ {\bm W}$, where ${\bm W}$ is the matrix each of whose entries is $1$, $-1$, or $0$ according to whether the corresponding entry of ${\bm P}_B^*$ is of the form $N^{-1}p_m$, $N^{-1}q_m$, or $0$.

\begin{lemma}\label{pistar}
Fix $r,s\ge1$ and let $\bm\pi$ be the unique stationary distribution of $\bm P^r_A \bm P^s_B$.  Then $(\bm P_A^*)^r(\bm P_B^*)^s$ has unique stationary distribution $\pi^*(\bm x,i):=\pi(\bm x)N^{-1}$ and
\begin{equation}\label{indep}
[\bm\pi^*(\bm P_A^*)^r(\bm P_B^*)^v](\bm x,i)=[\bm\pi\bm P_A^r\bm P_B^v](\bm x)N^{-1}
\end{equation}
for $v=0,1,\ldots,s-1$.
\end{lemma}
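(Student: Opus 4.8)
The plan is to exploit the fact that, in the augmented chain, the label of the next player to play is chosen uniformly and independently of everything else, so that any distribution of interest factors as a distribution on $\Sigma$ times the uniform distribution on $\{1,\ldots,N\}$. Concretely, I would introduce the \emph{lifting} operator $L$, the $\Sigma\times\Sigma^*$ matrix defined by $L(\bm x,(\bm y,j)):=N^{-1}$ if $\bm y=\bm x$ and $L(\bm x,(\bm y,j)):=0$ otherwise, so that a row vector $\bm\nu$ on $\Sigma$ is sent to the row vector $\bm\nu L$ on $\Sigma^*$ with $[\bm\nu L](\bm y,j)=\nu(\bm y)N^{-1}$. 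In this language the asserted distribution is simply $\bm\pi^*=\bm\pi L$, and the whole lemma will follow from the single intertwining identity
\begin{equation*}
L\bm P_B^*=\bm P_B L,\qquad\text{and likewise}\qquad L\bm P_A^*=\bm P_A L,
\end{equation*}
the second being the special case $p_0=p_1=p_2=p_3=1/2$ of the first.

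The heart of the proof is verifying this identity, and this is where I expect the only real work to lie. Comparing the $(\bm x,(\bm y,j))$ entries, one has $[\bm P_B L](\bm x,(\bm y,j))=P_B(\bm x,\bm y)N^{-1}$, while $[L\bm P_B^*](\bm x,(\bm y,j))=N^{-1}\sum_{k=1}^N P_B^*((\bm x,k),(\bm y,j))$, so it suffices to show $\sum_{k}P_B^*((\bm x,k),(\bm y,j))=P_B(\bm x,\bm y)$ for all $\bm x,\bm y$ and every fixed $j$. I would check this by the natural case analysis on $\bm y$: if $\bm y$ differs from $\bm x$ in two or more coordinates, both sides vanish; if $\bm y=\bm x^\ell$ for a unique $\ell$, then only the term $k=\ell$ survives on the left and reproduces $N^{-1}p_{m_\ell(\bm x)}$ or $N^{-1}q_{m_\ell(\bm x)}$ exactly as in (\ref{x to x^i}); and if $\bm y=\bm x$, then each $k$ contributes its ``stay'' probability and the sum over $k$ reproduces the diagonal entry (\ref{x to x}). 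The point that makes the bookkeeping go through is that, working from the starting state $\bm x$, every coin index is $m_k(\bm x)$, so that the $N^{-1}$ coming from $L$ (uniform current player) matches the $N^{-1}$ in $\bm P_B$ (choice of the player who plays), while the $N^{-1}$ built into $\bm P_B^*$ (uniform next player) is precisely what makes both sides independent of $j$.

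Granting the intertwining identity, the rest is immediate and purely formal. Iterating $L\bm P_A^*=\bm P_A L$ and $L\bm P_B^*=\bm P_B L$ gives $L(\bm P_A^*)^r(\bm P_B^*)^v=\bm P_A^r\bm P_B^v L$ for every $v\ge0$. Left-multiplying by $\bm\pi$ and using $\bm\pi^*=\bm\pi L$ yields
\begin{equation*}
\bm\pi^*(\bm P_A^*)^r(\bm P_B^*)^v=(\bm\pi\bm P_A^r\bm P_B^v)L,
\end{equation*}
which, read off at $(\bm x,i)$, is exactly (\ref{indep}). Taking $v=s$ and invoking the stationarity $\bm\pi\bm P_A^r\bm P_B^s=\bm\pi$ gives $\bm\pi^*(\bm P_A^*)^r(\bm P_B^*)^s=\bm\pi L=\bm\pi^*$, so $\bm\pi^*$ is stationary for $(\bm P_A^*)^r(\bm P_B^*)^s$; it is a probability vector since its entries are nonnegative and sum to $\sum_{\bm x,i}\pi(\bm x)N^{-1}=\sum_{\bm x}\pi(\bm x)=1$. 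Finally, because $(\bm P_A^*)^r(\bm P_B^*)^s$ was already noted to be irreducible and aperiodic, its stationary distribution is unique, so $\bm\pi^*$ is \emph{the} stationary distribution, completing the proof. The only genuine obstacle is the entrywise verification of $L\bm P_B^*=\bm P_B L$; once that is in hand, both assertions of the lemma drop out together.
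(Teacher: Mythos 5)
Your proof is correct, and the one computation it rests on checks out: the entrywise identity $\sum_k P_B^*((\bm x,k),(\bm y,j))=P_B(\bm x,\bm y)$ for every fixed $j$, verified by the three-way case analysis on $\bm y$ against (\ref{x to x^i}) and (\ref{x to x}), with the key observation that $P_B^*((\bm x,k),(\bm y,j))$ does not depend on $j$. But your route is genuinely different from the paper's. The paper argues probabilistically and in the opposite direction: it takes an \emph{arbitrary} stationary distribution $\bm\pi^*$ of $(\bm P_A^*)^r(\bm P_B^*)^s$, runs the nonhomogeneous chain $\bm X^*(0),\ldots,\bm X^*(r+s)$ started from $\bm\pi^*$, observes that its projection onto $\Sigma$ is a nonhomogeneous Markov chain with matrices $\bm P_A$ ($r$ times) and $\bm P_B$ ($s$ times), concludes that the $\Sigma$-marginal of $\bm\pi^*$ is stationary for $\bm P_A^r\bm P_B^s$ and hence equals $\bm\pi$ by hypothesis, and finally notes that the label $I(r+s)$ is uniform and independent of $\bm X(r+s)$ because the final step is a $\bm P_B^*$ step; the same independence observation at time $r+v$ yields (\ref{indep}). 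What each approach buys: your intertwining identity $L\bm P_A^*=\bm P_A L$, $L\bm P_B^*=\bm P_B L$ is mechanical and reusable, and it delivers stationarity and (\ref{indep}) in a single formal stroke. The paper's argument, by contrast, shows that \emph{every} stationary distribution of the augmented chain has the product form, so uniqueness of $\bm\pi^*$ is inherited directly from uniqueness of $\bm\pi$ (the lemma's hypothesis); you instead obtain uniqueness by citing the irreducibility and aperiodicity of $(\bm P_A^*)^r(\bm P_B^*)^s$ asserted just before the lemma. That citation is legitimate under the standing assumption $0<p_m<1$, but note that in Sec.~\ref{reducible} this assumption is relaxed and irreducibility can fail; the paper's proof survives there unchanged (it needs only uniqueness of $\bm\pi$), whereas your uniqueness step would require the additional input that the augmented chain still has a unique stationary distribution.
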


\begin{proof}
Let $\bm\pi^*$ be stationary for $(\bm P_A^*)^r(\bm P_B^*)^s$.  We will show that it has the form stated in the lemma.  Let $\bm X^*(0),\ldots,\bm X^*(r+s)$ be a nonhomogeneous Markov chain in $\Sigma^*$ with one-step transition matrices $\bm P_A^*,\ldots,\bm P_A^*$ ($r$ times), $\bm P_B^*,\ldots,\bm P_B^*$ ($s$ times), and initial distribution $\bm\pi^*$.  Then $\bm X^*(r+s)$ has distribution $\bm\pi^*(\bm P_A^*)^r(\bm P_B^*)^s=\bm\pi^*$.  Let $\bm X(0),\ldots,\bm X(r+s)$ be the projections of $\bm X^*(0),\ldots,\bm X^*(r+s)$ onto $\Sigma$, and let $\bm\pi$ be the marginal of $\bm\pi^*$ on $\Sigma$.  Then $\bm X(0),\ldots,\bm X(r+s)$ is a nonhomogeneous Markov chain in $\Sigma$ with one-step transition matrices $\bm P_A,\ldots,\bm P_A$ ($r$ times), $\bm P_B,\ldots,\bm P_B$ ($s$ times), and initial distribution $\bm\pi$.  Furthermore, $\bm X(r+s)$ has distribution $\bm\pi\bm P_A^r\bm P_B^s$ as well as distribution $\bm\pi$.  So $\bm\pi$ is the unique stationary distribution of $\bm P^r_A \bm P^s_B$, as assumed in the statement of the lemma.  On the other hand, writing $\bm X^*(r+s)=(\bm X(r+s),I(r+s))$, $I(r+s)$ is independent of $\bm X(r+s)$ and uniform$\{1,2,\ldots,N\}$.  So the distribution $\bm\pi^*$ of $\bm X^*(r+s)$ must have the stated form.

Finally, fix $v\in\{0,1,\ldots,s-1\}$ and write $\bm X^*(r+v)=(\bm X(r+v),I(r+v))$.  The left side of (\ref{indep}) is the distribution of $\bm X^*(r+v)$, while $[\bm\pi\bm P_A^r\bm P_B^v](\bm x)$ is the distribution of $\bm X(r+v)$.  Further, $I(r+v)$ is independent of $\bm X(r+v)$ and uniform$\{1,2,\ldots,N\}$ by virtue of the one-step transition matrices $\bm P_A^*$ and $\bm P_B^*$.  This implies (\ref{indep}).
\end{proof}

It follows from (\ref{mu via P*}) and Lemma~\ref{pistar} that
\begin{equation}\label{mu via P}
\mu_{[r,s]}={1\over r+s}\sum_{v=0}^{s-1}\bm\pi\bm P^r_A \bm P^v_B\dot{\bm P}_B\bm1,
\end{equation}
where $\dot{\bm P}_B$ is obtained from ${\bm P}_B$ by replacing each $q_m$ by $-q_m$ (prior to any simplification using $q_m=1-p_m$) and  $\bm1$ denotes a column vector of $1$s with entries indexed by $\Sigma$.
This expresses the mean profit $\mu_{[r,s]}$ in terms of the original one-step transition matrices $\bm P_A$ and $\bm P_B$ and the unique stationary distribution $\bm\pi$ of $\bm P_A^r\bm P_B^s$.  However, for computational purposes, we would like to express it in terms of the reduced one-step transition matrices $\bar{\bm P}_A$ and $\bar{\bm P}_B$ and the unique stationary distribution $\bar{\bm\pi}$ of $\bar{\bm P}_A^r\bar{\bm P}_B^s$.  The subgroup $G$ implicit in the notation is either the subgroup of cyclic permutations of $(1,2,\ldots,N)$ or (if $p_1=p_2$) the dihedral group of order $2N$.

\begin{theorem}\label{pattern_mean}
Given $r,s\ge1$, let $\{(\bm X(n), I(n))\}_{n\ge0}$ be the nonhomogeneous Markov chain in $\Sigma^*$ with one-step transition matrices $\bm P_A^*,\ldots,\bm P_A^*$ $(r\text{ times})$, $\bm P_B^*,\ldots,\bm P_B^*$ $(s\text{ times})$, $\bm P_A^*,\ldots,\bm P_A^*$ $(r\text{ times})$, $\bm P_B^*,\ldots,\bm P_B^*$ $(s\text{ times})$, and so on, and arbitrary initial distribution.  Define
$$
\xi_n:=w((\bm X(n-1),I(n-1)),(\bm X(n),I(n))), \qquad n\ge1,
$$
where the payoff function $w$ is $1$ for a win and $-1$ for a loss, determined by whether the corresponding entry of $\bm P_B^*$ is of the form $N^{-1}p_m$ or $N^{-1}q_m$.  Let $S_n:=\xi_1+\cdots+\xi_n$ for each $n\ge1$.  Then $n^{-1}S_n\to\mu_{[r,s]}$ \emph{a.s.}, where
\begin{equation}\label{mu via Pbar}
\mu_{[r,s]}={1\over r+s}\sum_{v=0}^{s-1}\bar{\bm\pi}\bar{\bm P}_A^r \bar{\bm P}_B^v \dot{\bar{\bm P}}_B\bm1,
\end{equation}
$\dot{\bar{\bm P}}_B$ is obtained from ${\bar{\bm P}}_B$ by replacing each $q_m$ by $-q_m$ (prior to any simplification using $q_m=1-p_m$), and $\bm 1$ denotes a column vector of $1$s with entries indexed by $\Sigma/$$\sim$.
\end{theorem}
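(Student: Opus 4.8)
The plan is to treat the almost-sure convergence and the full-matrix formula (\ref{mu via P}) as already in hand---the former directly from Theorem~\ref{slln} applied on the augmented space $\Sigma^*$ (with $\dot{\bm P}_A^*\bm1=\bm0$ giving (\ref{mu via P*})), the latter from (\ref{mu via P*}) together with Lemma~\ref{pistar}---so that the only remaining task is to show that the full-matrix expression (\ref{mu via P}) agrees with the reduced expression (\ref{mu via Pbar}). Concretely, I would fix $v\in\{0,1,\ldots,s-1\}$ and establish the scalar identity $\bm\pi\bm P_A^r\bm P_B^v\dot{\bm P}_B\bm1=\bar{\bm\pi}\bar{\bm P}_A^r\bar{\bm P}_B^v\dot{\bar{\bm P}}_B\bm1$; summing over $v$ and dividing by $r+s$ then yields the theorem.

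The structural inputs are that $\bm P_A$, $\bm P_B$, and $\dot{\bm P}_B$ are all $G$-invariant and that reduction is multiplicative. The $G$-invariance of $\dot{\bm P}_B$ follows from writing $\dot{\bm P}_B=\bm P_B\circ\bm W$, where $\bm W$ is the sign matrix recording win/loss: the Hadamard product of $G$-invariant matrices is $G$-invariant, and $\bm W$ is $G$-invariant because the win/loss designation of a one-step transition is unchanged under relabeling the players by an element of $G$. I would then note that $\overline{\dot{\bm P}_B}=\dot{\bar{\bm P}}_B$: since the reduction (\ref{Pbar-def}) acts by summing matrix entries and the substitution $q_m\mapsto-q_m$ is carried out symbolically prior to any simplification, the substitution commutes with the summation, so reducing the sign-flipped matrix gives the same symbolic expression as flipping signs in the reduced matrix. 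Applying Lemma~\ref{multiplication} repeatedly, $\bm M:=\bm P_A^r\bm P_B^v\dot{\bm P}_B$ is then $G$-invariant with $\bar{\bm M}=\bar{\bm P}_A^r\bar{\bm P}_B^v\dot{\bar{\bm P}}_B$. Separately, Lemma~\ref{multiplication} gives $\overline{\bm P_A^r\bm P_B^s}=\bar{\bm P}_A^r\bar{\bm P}_B^s$, so by Lemma~\ref{invariant} its unique stationary distribution $\bm\pi$ is $G$-invariant and satisfies $\bar\pi([\bm x])=|[\bm x]|\,\pi(\bm x)$, with $\bar{\bm\pi}$ the unique stationary distribution of the reduced product appearing in the theorem.

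To finish I would push the column vector $\bm1$ and the row vector $\bm\pi$ through the reduction. The column $\bm g:=\bm M\bm1$ is $G$-invariant, being a row-sum of a $G$-invariant matrix, hence constant on equivalence classes; and a direct computation from (\ref{Pbar-def}), namely $[\bar{\bm M}\bm1]([\bm x])=\sum_{[\bm y]}\bar M([\bm x],[\bm y])=\sum_{\bm y}M(\bm x,\bm y)=[\bm M\bm1](\bm x)$, shows that the value of $\bm g$ at any representative equals $[\bar{\bm M}\bm1]([\bm x])$. Grouping the sum defining the pairing by equivalence classes and using both the $G$-invariance of $\bm g$ and $\bar\pi([\bm x])=|[\bm x]|\,\pi(\bm x)$ then gives $\sum_{\bm x}\pi(\bm x)g(\bm x)=\sum_{[\bm x]}\bar\pi([\bm x])\,[\bar{\bm M}\bm1]([\bm x])=\bar{\bm\pi}\bar{\bm M}\bm1$, which is precisely $\bar{\bm\pi}\bar{\bm P}_A^r\bar{\bm P}_B^v\dot{\bar{\bm P}}_B\bm1$.

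All of this is routine linear algebra once the invariance facts are assembled, so I do not anticipate a genuine obstacle; the one point demanding care is the identity $\overline{\dot{\bm P}_B}=\dot{\bar{\bm P}}_B$, since a single reduced entry can pool several original entries of mixed $p_m/q_m$ type, and one must confirm that the convention of flipping the $q_m$'s before simplifying is consistent with the entrywise summation in (\ref{Pbar-def}). A secondary bookkeeping subtlety is keeping straight the two reductions in play---class-size weighting for the row vector $\bm\pi$ versus plain restriction for the $G$-invariant column $\bm g$---but these are dictated by $\pi(\bm x)=\bar\pi([\bm x])/|[\bm x]|$ and create no real difficulty.
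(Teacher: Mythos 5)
Your proposal is correct and follows essentially the same route as the paper: the a.s.\ convergence comes from Theorem~\ref{slln} on the augmented chain via (\ref{mu via P*}) and Lemma~\ref{pistar}, and the identity $\bm\pi\bm Q_v\bm1=\bar{\bm\pi}\bar{\bm Q}_v\bm1$ for the $G$-invariant matrix $\bm Q_v:=\bm P_A^r\bm P_B^v\dot{\bm P}_B$ is established by grouping sums over equivalence classes using Lemmas~\ref{invariant} and \ref{multiplication}, exactly as in the paper's display (\ref{Q to Qbar}). Your reorganization (first matching row sums of $\bm M$ and $\bar{\bm M}$, then class-weighting $\bm\pi$) and your explicit attention to $\overline{\dot{\bm P}_B}=\dot{\bar{\bm P}}_B$ are only cosmetic refinements of the same argument.
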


\begin{proof}
This is an application of Theorem~\ref{slln}.
To show that the right sides of (\ref{mu via P}) and (\ref{mu via Pbar}) are equal, we note that, for any $G$-invariant square matrix $\bm Q$, we have, using Lemma~\ref{invariant},
\begin{eqnarray}\label{Q to Qbar}
\bm\pi{\bm Q}\bm1&=&\sum_{\bm x\in\Sigma}\;\sum_{\bm y\in\Sigma}\pi(\bm x){Q}(\bm x,\bm y)\nonumber\\
&=&\sum_{[\bm x]\in\Sigma/\sim}\;\sum_{\bm x':\bm x'\sim\bm x}\;\sum_{[\bm y]\in\Sigma/\sim}\;\sum_{\bm y':\bm y'\sim\bm y}\pi(\bm x'){Q}(\bm x',\bm y')\nonumber\\
&=&\sum_{[\bm x]\in\Sigma/\sim}\;\sum_{\bm x':\bm x'\sim\bm x}\;\sum_{[\bm y]\in\Sigma/\sim}\pi(\bm x')\bar{Q}([\bm x'],[\bm y])\nonumber\\
&=&\sum_{[\bm x]\in\Sigma/\sim}\;\sum_{[\bm y]\in\Sigma/\sim}\pi(\bm x)|[\bm x]|{\bar{Q}}([\bm x],[\bm y])\nonumber\\
&=&\sum_{[\bm x]\in\Sigma/\sim}\;\sum_{[\bm y]\in\Sigma/\sim}\bar{\pi}([\bm x]){\bar{Q}}([\bm x],[\bm y])\nonumber\\
&=&\bar{\bm\pi}{\bar{\bm Q}}{\bm1},
\end{eqnarray}
where $\bm 1$ denotes a column vector of $1$s of the appropriate dimension.
Now if $\bm P_B$ is $G$-invariant, then so too is $\dot{\bm P}_B$, and likewise for
$\bm Q_v:={\bm P}_A^r{\bm P}_B^v\dot{{\bm P}}_B$ by Lemma~\ref{multiplication};  using (\ref{Q to Qbar}) and Lemma~\ref{multiplication},
$$
\bm\pi{\bm P}_A^r{\bm P}_B^v\dot{{\bm P}}_B\bm1=\bm\pi\bm Q_v\bm1=\bar{\bm\pi}\bar{\bm Q}_v\bm1=\bar{\bm\pi} \bar{\bm P}_A^r \bar{\bm P}_B^v \dot{\bar{{\bm P}}}_B\bm1
$$
for $v=0,1,\ldots,s-1$.  (Note that $\bar{\dot{\bm P}}=\dot{\bar{\bm P}}$.)
\end{proof}

We conclude with an application of the SLLN for this model.

Let us denote $\mu_B$, the mean profit per turn to the ensemble of $N$ players always playing game $B$, by $\mu_B(p_0,p_1,p_2,p_3)$ to emphasize its dependence on the parameter vector.  As shown in \cite{EL12b},
\begin{equation}\label{couple B}
\mu_B(p_0,p_1,p_2,p_3)=-\mu_B(q_3,q_2,q_1,q_0),
\end{equation}
where $q_m:=1-p_m$ for $m=0,1,2,3$.
Fix $r,s\ge1$.  Let us denote $\mu_{[r,s]}$ of (\ref{mu via P*}), (\ref{mu via P}), or (\ref{mu via Pbar}) by $\mu_{[r,s]}(p_0,p_1,p_2,p_3)$.

\begin{corollary}\label{coupling}
$$
\mu_{[r,s]}(p_0,p_1,p_2,p_3)=-\mu_{[r,s]}(q_3,q_2,q_1,q_0).
$$
In particular, if $p_0+p_3=1$ and $p_1+p_2=1$, then $\mu_{[r,s]}(p_0,p_1,p_2,p_3)=0$.
\end{corollary}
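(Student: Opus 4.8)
The plan is to prove the identity as an application of the strong law of large numbers (Theorem~\ref{pattern_mean}), via a coupling of the two ensembles that implements the status-reversal symmetry; the ``in particular'' clause is then immediate. The whole argument rests on one combinatorial observation: writing $\phi(\bm x):=(1-x_1,\ldots,1-x_N)$ for the coordinatewise complement (loser$\leftrightarrow$winner), one has $m_i(\phi(\bm x))=2(1-x_{i-1})+(1-x_{i+1})=3-m_i(\bm x)$ for every $i$ and every $\bm x\in\Sigma$. Consequently the parameter transformation $(p_0,p_1,p_2,p_3)\mapsto(q_3,q_2,q_1,q_0)$ is exactly what is needed to make complementation a symmetry: writing $\tilde p_m:=q_{3-m}$ for the transformed parameters, the game-$B$ heads probability in the transformed system evaluated at the complemented state is $\tilde p_{m_i(\phi(\bm x))}=q_{3-(3-m_i(\bm x))}=q_{m_i(\bm x)}$, i.e.\ it equals the game-$B$ tails probability in the original system at $\bm x$. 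For game $A$ both probabilities equal $1/2$, since game $A$ is fixed by the transformation.

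First I would set up the coupling. Let $\{(\bm X(n),I(n))\}_{n\ge0}$ be the nonhomogeneous chain of Theorem~\ref{pattern_mean} for parameters $(p_0,p_1,p_2,p_3)$, and build on the same probability space a chain $\{(\bm Y(n),J(n))\}_{n\ge0}$ for the transformed parameters $(q_3,q_2,q_1,q_0)$ by using the \emph{same} selected-player process, $J(n)=I(n)$, and, at each turn, a single uniform$(0,1)$ variable $U$ governing both coin tosses, with the rule that system~1 shows heads iff $U\le p_{m_i(\bm x)}$ (resp.\ $U\le1/2$ on a game-$A$ turn) while system~2 shows heads iff system~1 shows tails. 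By the observation above, the probability that system~2 shows heads is then exactly $\tilde p_{m_i(\phi(\bm x))}$ (resp.\ $1/2$), so this is a legitimate coupling of the two prescribed dynamics. Since a player's new status is $1$ on heads and $0$ on tails regardless of his old status, and the payoff is $+1$ on heads and $-1$ on tails, a turn in which system~1 shows heads sets $X_{I}=1$ with payoff $\xi_n=+1$ while system~2 shows tails, setting $Y_{I}=0$ with payoff $\eta_n=-1$; the opposite assignment occurs when system~1 shows tails. Coordinates other than $I$ are untouched in both systems.

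Next I would run the induction. Assuming $\bm Y(0)=\phi(\bm X(0))$, the turn-by-turn analysis just described preserves $\bm Y(n)=\phi(\bm X(n))$ for all $n$ and gives $\eta_n=-\xi_n$ at every step, hence $S_n^{(2)}:=\eta_1+\cdots+\eta_n=-S_n$ identically. Applying Theorem~\ref{pattern_mean} to each system separately, $n^{-1}S_n\to\mu_{[r,s]}(p_0,p_1,p_2,p_3)$ and $n^{-1}S_n^{(2)}\to\mu_{[r,s]}(q_3,q_2,q_1,q_0)$ almost surely, so the two limits are negatives of each other, which is the asserted identity. Finally, when $p_0+p_3=1$ and $p_1+p_2=1$ we have $q_3=p_0$, $q_2=p_1$, $q_1=p_2$, and $q_0=p_3$, so the transformed parameter vector coincides with the original; the identity then reads $\mu_{[r,s]}=-\mu_{[r,s]}$, forcing $\mu_{[r,s]}=0$.

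The one delicate point is the consistency of the coupling, i.e.\ verifying that ``system-1 heads $\Leftrightarrow$ system-2 tails'' really reproduces the transformed transition law; this is precisely where the identity $m_i(\phi(\bm x))=3-m_i(\bm x)$ together with $\tilde p_m=q_{3-m}$ is used, and it is what couples the parameter reversal to complementation of statuses. Everything else is bookkeeping. Alternatively, one can give a purely algebraic proof from formula~(\ref{mu via P}): with $\bm\Phi$ the permutation matrix of $\phi$, the same observation yields $\tilde{\bm P}_A=\bm\Phi\bm P_A\bm\Phi$, $\tilde{\bm P}_B=\bm\Phi\bm P_B\bm\Phi$, and $\dot{\tilde{\bm P}}_B=-\bm\Phi\dot{\bm P}_B\bm\Phi$; since $\bm\Phi$ is an involution with $\bm\Phi\bm1=\bm1$ and the transformed stationary vector is $\tilde{\bm\pi}=\bm\pi\bm\Phi$, all interior factors $\bm\Phi\bm\Phi$ collapse and the lone minus sign survives, again giving $\mu_{[r,s]}(q_3,q_2,q_1,q_0)=-\mu_{[r,s]}(p_0,p_1,p_2,p_3)$.
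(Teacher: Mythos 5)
Your proposal is correct and follows essentially the same route as the paper: the paper's proof invokes the status-reversal coupling argument from \cite{EL12b} (used there to prove (\ref{couple B})) applied to the nonhomogeneous Markov chain of Theorem~\ref{pattern_mean}, which is exactly the coupling you construct in detail via $\phi$ and the identity $m_i(\phi(\bm x))=3-m_i(\bm x)$. Your closing algebraic argument with the involution $\bm\Phi$ is a valid alternative not given in the paper, but the main argument coincides with the paper's.
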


\begin{proof}
(\ref{couple B}) was proved in \cite{EL12b} using a coupling argument.  The proof of the corollary is the same, except that the (homogeneous) Markov chain is replaced by the nonhomogeneous one in the statement of Theorem~\ref{pattern_mean}.
\end{proof}

\section{Reducible Cases}\label{reducible}

We have assumed that $0<p_m<1$ for $m=0,1,2,3$, which ensures that the Markov chain for game $B$ is irreducible and aperiodic. This assumption was weakened to some extent in \cite{EL12b}.  We can derive the results obtained here under analogous conditions by noticing that the irreducibility and aperiodicity assumptions of Theorem~\ref{slln} can be weakened to simply ergodicity, which requires a unique stationary distribution and convergence of the distribution at time $n$ to that stationary distribution as $n\to\infty$, regardless of the initial distribution.

Fix $r,s\ge1$. We first notice that the Markov chain in $\Sigma$ with one-step transition matrix $\bm {\bm P}_A^{r-1}{\bm P}_B^s\bm P_A$, $\bm {\bm P}_A^{r-2}{\bm P}_B^s\bm P_A^2$, \dots, or ${\bm P}_B^s{\bm P}_A^r$ is ergodic (in fact, irreducible and aperiodic) for all choices of the parameters $p_0,p_1,p_2,p_3\in[0,1]$.  It remains to check the ergodicity of the Markov chain in $\Sigma$ with one-step transition matrix ${\bm P}_B^{s-1}{\bm P}_A^r\bm P_B$, ${\bm P}_B^{s-2}{\bm P}_A^r\bm P_B^2$, \dots, or ${\bm P}_A^r{\bm P}_B^s$.  We denote by $\bm0\in\Sigma$ the state consisting of all 0s, and by $\bm1\in\Sigma$ the state consisting of all $1$s.

\begin{enumerate}
\item Suppose $p_0=1$ and $0<p_m<1$ for $m=1,2,3$, as Toral \cite{T01} originally assumed.  Then state $\bm 0$ cannot be reached from $\Sigma-\{\bm 0\}$ by $\bm P_B$ and therefore by ${\bm P}_B^{s-1}{\bm P}_A^r\bm P_B$, ${\bm P}_B^{s-2}{\bm P}_A^r\bm P_B^2$, \dots, and ${\bm P}_A^r{\bm P}_B^s$.  For these one-step transition matrices, $\Sigma-\{\bm0\}$ is irreducible and aperiodic while $\bm0$ is transient.  Thus, the required ergodicity holds.  Theorem~\ref{slln} (modified as noted above) and Theorem~\ref{pattern_mean} apply.

\item Suppose $p_0=0$ and $0<p_m<1$ for $m=1,2,3$.  Then state $\bm 0$ is absorbing for $\bm P_B$, and absorption occurs with probability 1, so $\mu_B=-1$. However ${\bm P}_B^{s-1}{\bm P}_A^r\bm P_B$, ${\bm P}_B^{s-2}{\bm P}_A^r\bm P_B^2$, \dots, and ${\bm P}_A^r{\bm P}_B^s$ are all irreducible and aperiodic in $\Sigma$. Consequently, Theorem~\ref{slln} (as stated) and Theorem~\ref{pattern_mean} apply.

\item Suppose $p_3=0$ and $0<p_m<1$ for $m=0,1,2$.  This is analogous to case 1 but with the role of $\bm0$ played by $\bm1$.

\item Suppose $p_3=1$ and $0<p_m<1$ for $m=0,1,2$.  This is analogous to case 2, except that state $\bm 1$ is absorbing for game $B$ and $\mu_B=1$.

\item Suppose $p_0=1$, $p_3=0$, and $0<p_m<1$ for $m=1,2$.  Then states $\bm 0$ and $\bm 1$ cannot be reached from $\Sigma-\{\bm 0,\bm 1\}$ by $\bm P_B$ and therefore by ${\bm P}_B^{s-1}{\bm P}_A^r\bm P_B$, ${\bm P}_B^{s-2}{\bm P}_A^r\bm P_B^2$, \dots, and ${\bm P}_A^r{\bm P}_B^s$. For these one-step transition matrices, $\Sigma-\{\bm0,\bm1\}$ is irreducible and aperiodic while $\bm0$ and $\bm1$ are transient.  Thus, the required ergodicity holds.  Again, Theorem~\ref{slln} (modified) and Theorem~\ref{pattern_mean} apply.

\item Suppose $p_0=0$, $p_3=1$, and $0<p_m<1$ for $m=1,2$.  Then both $\bm 0$ and $\bm 1$ are absorbing for $\bm P_B$, and absorption occurs with probability 1. However, the Markov chain with one-step transition matrix ${\bm P}_B^{s-1}{\bm P}_A^r\bm P_B$, ${\bm P}_B^{s-2}{\bm P}_A^r\bm P_B^2$, \dots, or ${\bm P}_A^r{\bm P}_B^s$ has different behavior.  If $N$ is odd, it is irreducible and aperiodic in $\Sigma$, whereas if $N$ is even, $\Sigma-\{01\cdots01, 10\cdots10\}$ is irreducible and aperiodic and $01\cdots01$ and $10\cdots10$ are transient.  Again, Theorem~\ref{slln} (as stated if $N$ is odd, modified if $N$ is even) and Theorem~\ref{pattern_mean} apply.
\end{enumerate}

\section{The Parrondo Region}
\label{region}
In this section we assume that $p_1=p_2$, so that the coin tossed in game $B$ depends only on the number of winners among the two nearest neighbors. For fixed $N\ge3$ and $r, s\ge1$, we have three free probability parameters, $p_0, p_1, $ and $p_3$, so our parameter space is the unit cube $(0,1)^3 :=(0,1)\times(0,1)\times(0,1)$. With caution (see Sec.~\ref{reducible}), we can also include parts of the boundary.  Of interest are the \textit{Parrondo region}, the subset of the parameter space in which the Parrondo effect (i.e., $\mu_B\le0$ and $\mu_{[r,s]}>0$) appears, and the \textit{anti-Parrondo region}, the subset of the parameter space in which the anti-Parrondo effect (i.e., $\mu_B\ge0$ and $\mu_{[r,s]}<0$) appears.

\begin{theorem}\label{volume}
Fix $N\ge3$ and $r,s\ge1$.  With $q_m:=1-p_m$ for $m=0,1,3$, the parameter vector $(p_0,p_1,p_3)$ belongs to the Parrondo region if and only if the parameter vector $(q_3,q_1,q_0)$ belongs to the anti-Parrondo region.  In particular, the Parrondo region and the anti-Parrondo region have the same (three-dimensional) volume.
\end{theorem}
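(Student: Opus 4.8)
The plan is to deduce everything from the two sign-reversal symmetries already in hand, namely (\ref{couple B}) for $\mu_B$ and Corollary~\ref{coupling} for $\mu_{[r,s]}$, specialized to the standing assumption $p_1=p_2$ of this section. First I would check that the involution $(p_0,p_1,p_2,p_3)\mapsto(q_3,q_2,q_1,q_0)$ appearing in those two identities respects the constraint $p_1=p_2$: indeed $p_1=p_2$ forces $q_1=q_2$, so on the slice $p_1=p_2$ it descends to the affine map $T(p_0,p_1,p_3):=(q_3,q_1,q_0)=(1-p_3,1-p_1,1-p_0)$ of the cube $(0,1)^3$ onto itself. With this restriction, (\ref{couple B}) and Corollary~\ref{coupling} read $\mu_B(p_0,p_1,p_3)=-\mu_B(T(p_0,p_1,p_3))$ and $\mu_{[r,s]}(p_0,p_1,p_3)=-\mu_{[r,s]}(T(p_0,p_1,p_3))$.

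The biconditional is then immediate from these two identities together with the way the defining inequalities of the two regions are arranged. If $(p_0,p_1,p_3)$ lies in the Parrondo region, so that $\mu_B\le0$ and $\mu_{[r,s]}>0$, then flipping signs gives $\mu_B(T(p_0,p_1,p_3))\ge0$ and $\mu_{[r,s]}(T(p_0,p_1,p_3))<0$, which is exactly the condition that $T(p_0,p_1,p_3)=(q_3,q_1,q_0)$ lie in the anti-Parrondo region. The point to emphasize is that the nonstrict $\mu_B\le0$ maps to the nonstrict $\mu_B\ge0$, and the strict $\mu_{[r,s]}>0$ to the strict $\mu_{[r,s]}<0$, so the two sets of conditions correspond term by term. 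The converse needs no separate computation, since $T$ is an involution (because $1-(1-p)=p$, we have $T\circ T=\mathrm{id}$); running the same argument with $(q_3,q_1,q_0)$ in place of $(p_0,p_1,p_3)$ yields the reverse implication.

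For the volume assertion I would observe that $T$ is measure-preserving. It is the composition of the order-reversing coordinate permutation $(p_0,p_1,p_3)\mapsto(p_3,p_1,p_0)$ with the coordinatewise reflection $p\mapsto1-p$; both are isometries of $\mathbb{R}^3$, so the Jacobian determinant of $T$ has absolute value $1$. Combined with the biconditional — which, via the involution property, shows that $T$ carries the Parrondo region bijectively onto the anti-Parrondo region — this gives equality of Lebesgue measure, which is the final claim.

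I do not expect a genuine obstacle; the argument is bookkeeping layered on top of Corollary~\ref{coupling}. The only points that demand care are verifying that the four-parameter symmetry preserves the hyperplane $p_1=p_2$ (so that it descends to the three-parameter cube in the first place) and checking that the strict/nonstrict pattern built into the definitions of the two regions is precisely the one respected by a global sign change. The reducible boundary cases of Section~\ref{reducible} play no role in the volume statement, since the boundary of the cube is a three-dimensional null set.
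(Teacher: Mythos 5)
Your proposal is correct and takes essentially the same approach as the paper: the paper likewise specializes (\ref{couple B}) and Corollary~\ref{coupling}ref{coupling} to $p_1=p_2$ to get $\mu_B=-\mu_B^*$ and $\mu_{[r,s]}=-\mu_{[r,s]}^*$ at the reflected point $(q_3,q_1,q_0)$, deduces the biconditional, and concludes equality of volumes from the fact that $\Lambda(p_0,p_1,p_3)=(1-p_3,1-p_1,1-p_0)$ has Jacobian of modulus $1$. Your extra remarks (the symmetry preserving the slice $p_1=p_2$, the involution property giving the converse, and the strict/nonstrict bookkeeping) are details the paper leaves implicit, but the argument is the same.
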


\begin{proof}
Let $\mu_B$ and $\mu_{[r,s]}$ denote the means for the parameter vector $(p_0,p_1,p_3)$, and let $\mu_B^*$ and $\mu_{[r,s]}^*$ denote the means for the parameter vector $(q_3,q_1,q_0)$.  Then, by (\ref{couple B}) and Corollary~\ref{coupling}, $\mu_B=\mu_B(p_0,p_1,p_1,p_3)=-\mu_B(q_3,q_1,q_1,q_0)=-\mu_B^*$ and $\mu_{[r,s]}=\mu_{[r,s]}(p_0,p_1,p_1,p_3)=-\mu_{[r,s]}(q_3,q_1,q_1,q_0)=-\mu_{[r,s]}^*$.  Therefore,
$\mu_B\le0$ and $\mu_{[r,s]}>0$ if and only if $\mu_B^*\ge0$ and $\mu_{[r,s]}^*<0$.  Since the transformation
$$
\Lambda(p_0,p_1,p_3):=(1-p_3,1-p_1,1-p_0)
$$
from $(0,1)^3$ to $(0,1)^3$ has Jacobian identically equal to $1$, the final conclusion holds.
\end{proof}

We can therefore focus our attention in what follows on the Parrondo region.

Using the mean formula (\ref{mu via Pbar}) in terms of the reduced Markov chains, together with the assumption that $p_1=p_2$, when $N=3$ we find that
\begin{eqnarray*}
\mu_{[1,1]}&=&{5 [2 p_1 (3 + p_0 + q_3) - 3 q_0 - 5 q_3]\over
  2 (17 + 15 p_0 + 4 p_0 p_1 + 8 p_0 q_3 + 4 p_1 p_3 + 4 q_1 + 19 q_3)},\\
\mu_{[1,2]}&=&2 [-494 + 287 p_0 - 51 p_0^2 + 181 p_3 - 13 p_0 p_3 - 12 p_0^2 p_3 +142 p_3^2 -40 p_0 p_3^2\\
 &&\quad{} + (520 + 61 p_0 - 65 p_0^2 - 113 p_3 + 130 p_0 p_3 - 28 p_0^2 p_3 - 65 p_3^2 + 28 p_0 p_3^2) p_1 \\
 &&\quad{} - 2 (1 - p_0 - p_3) (13 + 7 p_0 - 7 p_3) p_1^2]/\{3 [494 + 335 p_0 - 154 p_0^2 \\
 &&\quad{} - 157 p_3 - 64 p_0 p_3 + 32 p_0^2 p_3 - 130 p_3^2 - 64 p_0 p_3^2 + 32 p_0^2 p_3^2  \\
 &&\quad{} - 2 (1 - p_0 - p_3) (89 - 8 p_0 + 16 p_3 - 16 p_0 p_3) p_1 + 8 (1 - p_0 - p_3)^2 p_1^2]\}, \\
\mu_{[2,1]}&=&{38 [p_1 (12 + p_0 + q_3) - 6 q_0 - 7 q_3]\over
  3 (367 + 111 p_0 + 8 p_0 p_1 + 16 p_0 q_3 + 8 p_1 p_3 + 8 q_1 + 119 q_3)}.
\end{eqnarray*}
Since
\begin{eqnarray*}
\mu_B={p_1 (p_0 + q_3) - q_3\over p_0 p_1 + 2 p_0 q_3 + q_1 q_3}
\end{eqnarray*}
(see \cite{EL12b}), the Parrondo region for the $[1,1]$ pattern is described by
$p_1 (p_0 + q_3) - q_3\le 0$ and $2 p_1 (3 + p_0 + q_3) - 3 q_0 - 5 q_3>0$,
or, equivalently,
\begin{equation*}
{3q_0+5q_3\over2(3+p_0+q_3)}<p_1\le{q_3\over p_0+q_3}.
\end{equation*}
Similarly, the Parrondo region for the $[2,1]$ pattern is described by
\begin{equation*}
{6 q_0 + 7 q_3\over12 + p_0 + q_3}<p_1\le{q_3\over p_0+q_3}.
\end{equation*}
With the parameter space being the $(p_0,p_3,p_1)$ unit cube, the Parrondo region is the union of two connected components.  See row 1 of Figure~\ref{region_fig}.  For the $[1,1]$ pattern, integration yields its exact volume, $(100\ln5+36\ln3-276\ln2-9)/8\approx0.0231515$.   For the $[1,2]$ pattern, numerical integration gives its approximate volume, 0.0166398.  For the $[2,1]$ pattern, integration results in its exact volume (rounded), 0.0268219.

In \cite{EL12b} we pointed out a close relationship between the spatially dependent Parrondo games with $N=3$ and the history-dependent Parrondo games of Parrondo, Harmer, and Abbott \cite{PHA00}.  Specifically, the Parrondo regions for the equally weighted random mixtures were shown to be equal in the two models.  The analogous result for the nonrandom pattern $[1,1]$ fails, however.

When $N=4$ or $N=5$, we have explicit, albeit very complicated, formulas for $\mu_{[r,s]}$ with $r+s\le 3$. It would be impractical to state those formulas here. Instead, we plot the Parrondo regions and the anti-Parrondo regions for these cases in rows 2 and 3 of Figure~\ref{region_fig}.  They are surprisingly similar to the regions for the random-mixture cases depicted in \cite{EL12b}.

\begin{figure}[ht]
\centering
\includegraphics[width = 1.5in]{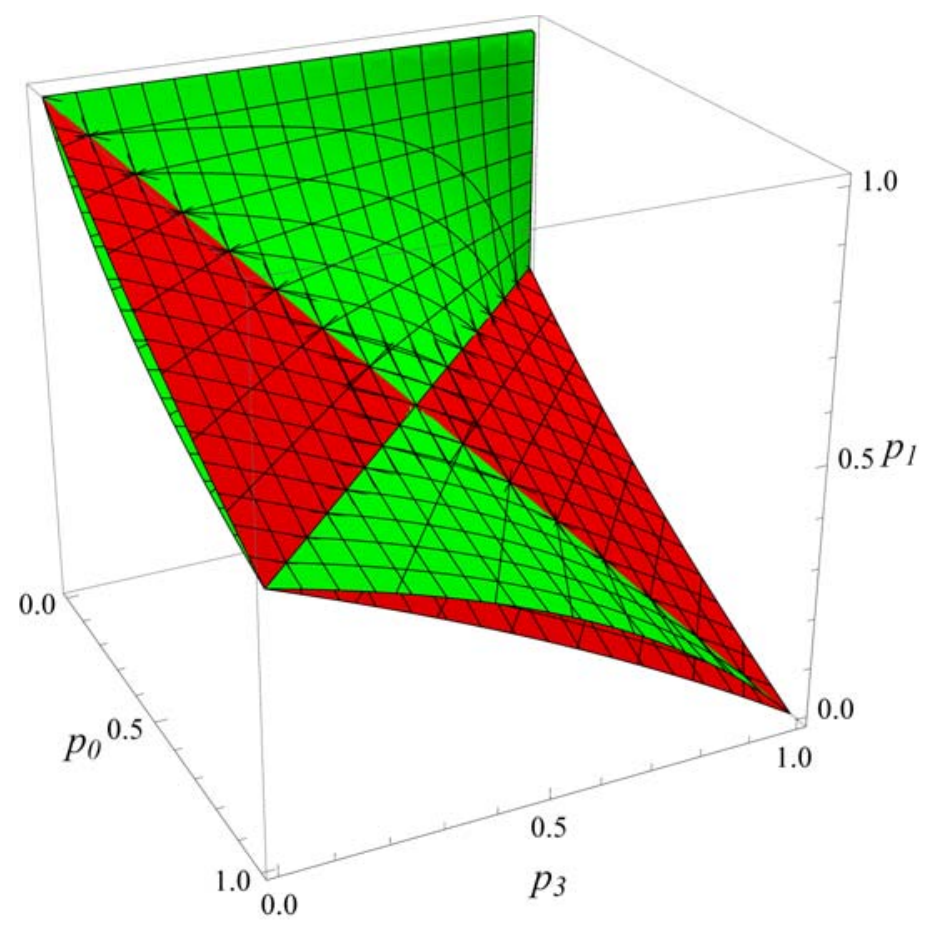}\;
\includegraphics[width = 1.5in]{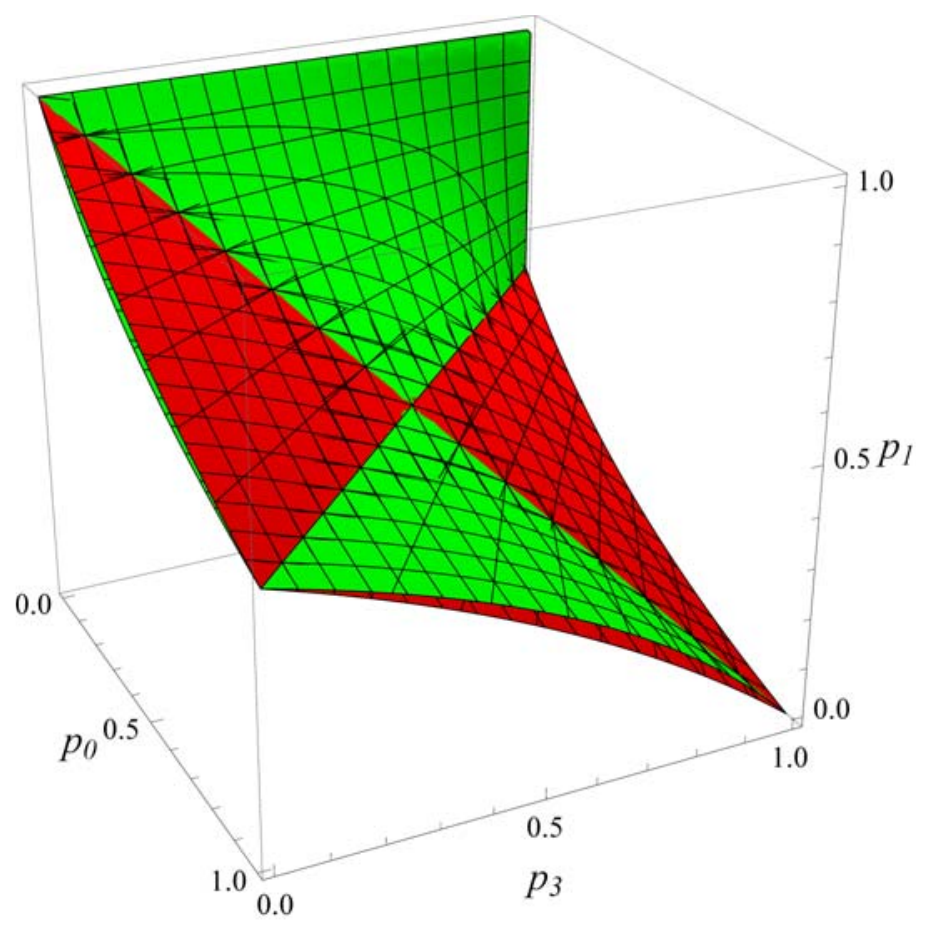}\;
\includegraphics[width = 1.5in]{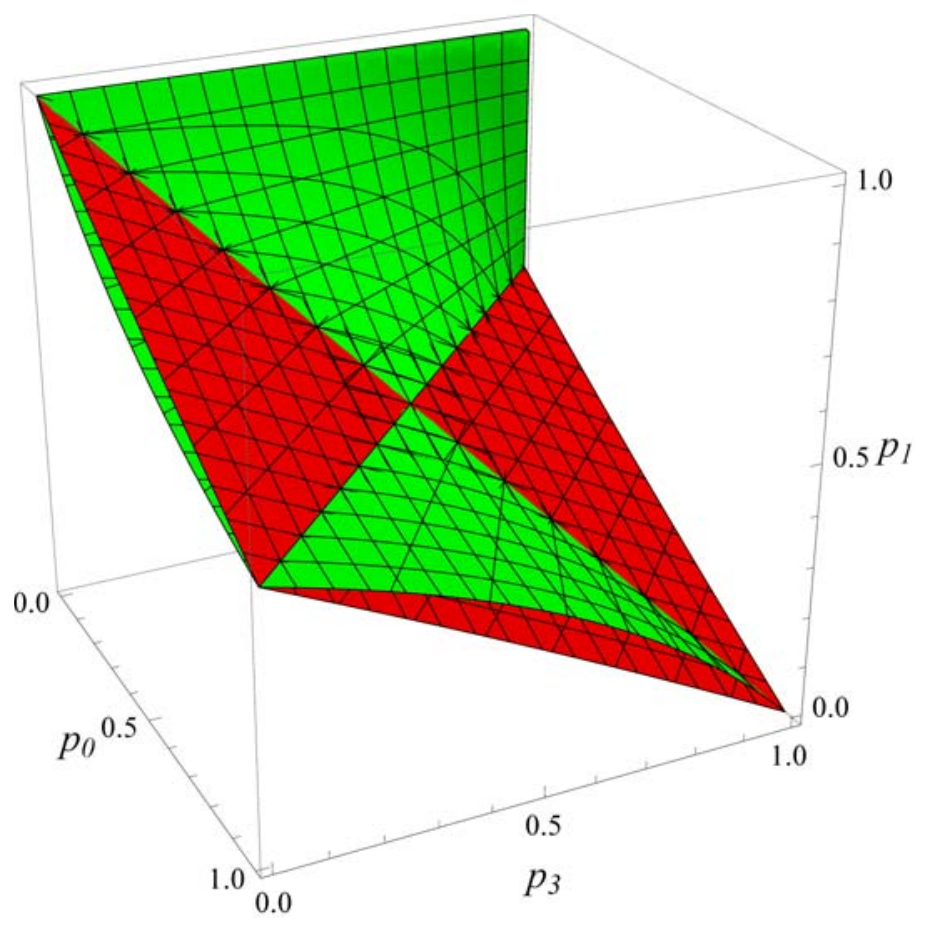}

\medskip
\includegraphics[width = 1.5in]{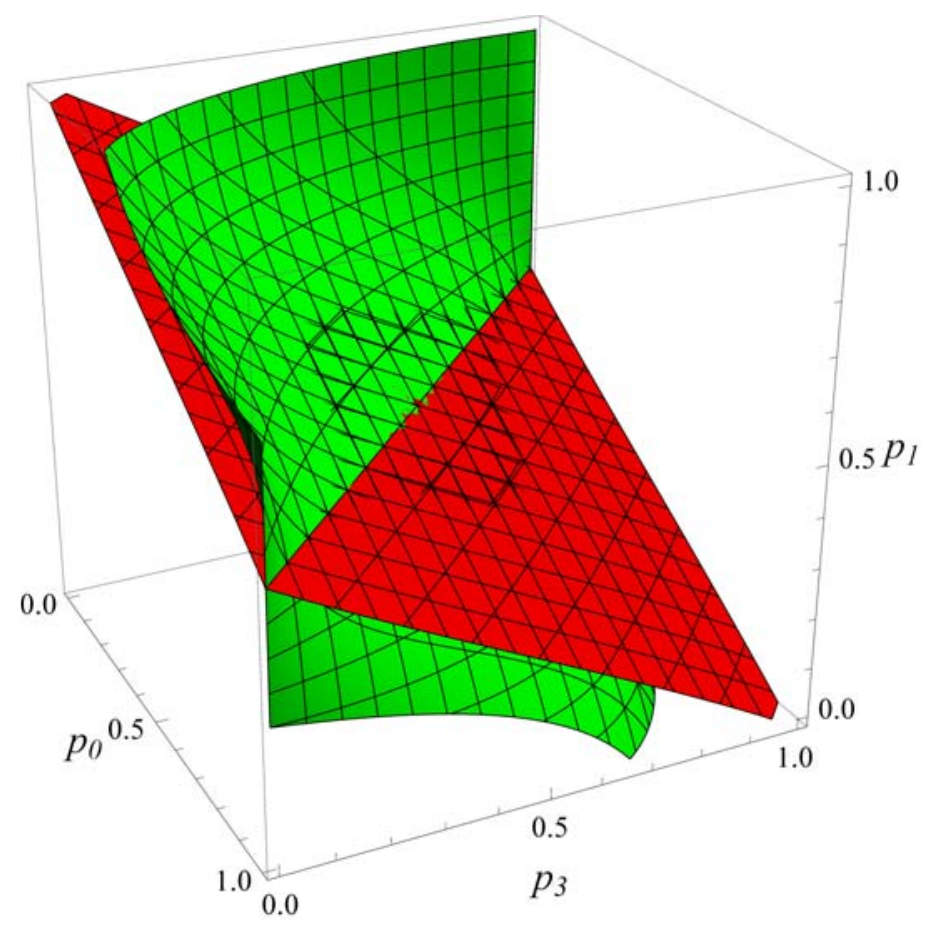}\;
\includegraphics[width = 1.5in]{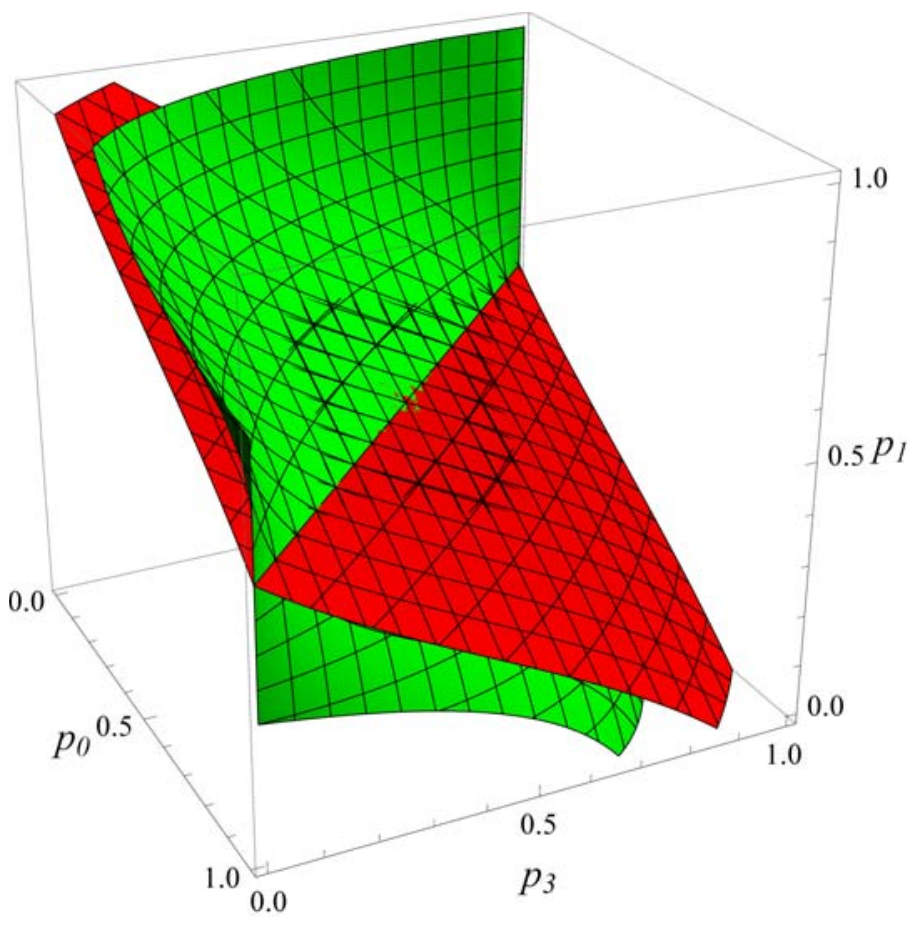}\;
\includegraphics[width = 1.5in]{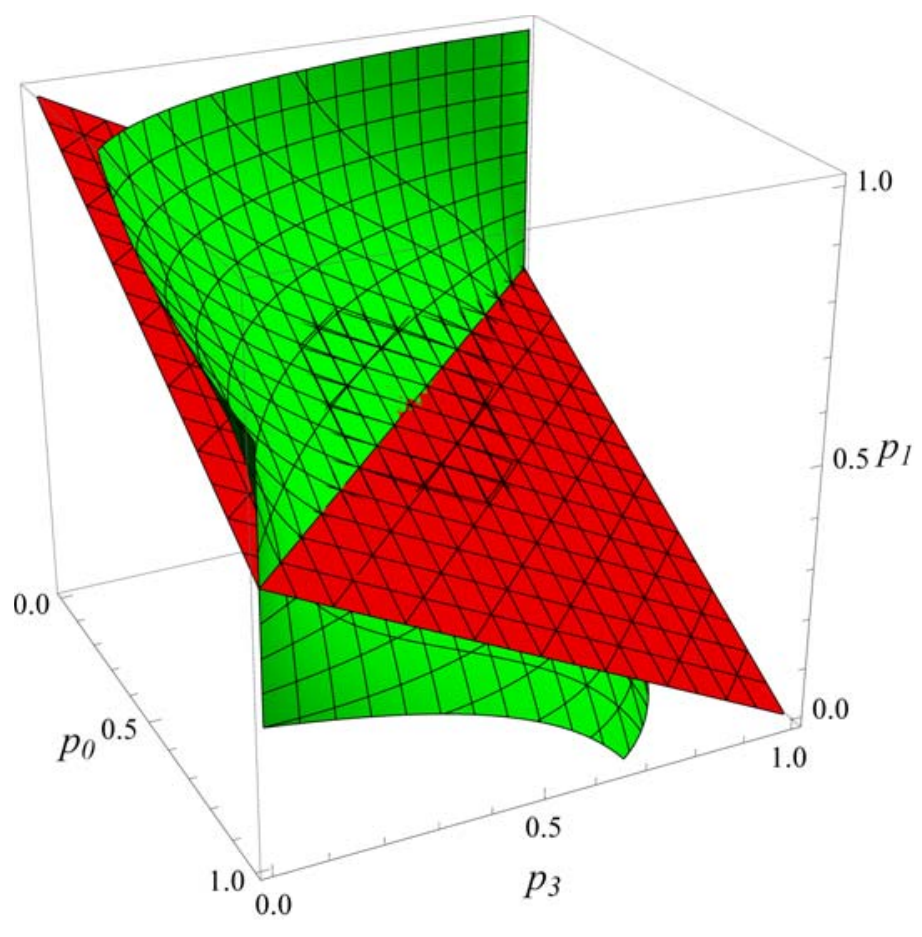}

\medskip
\includegraphics[width = 1.5in]{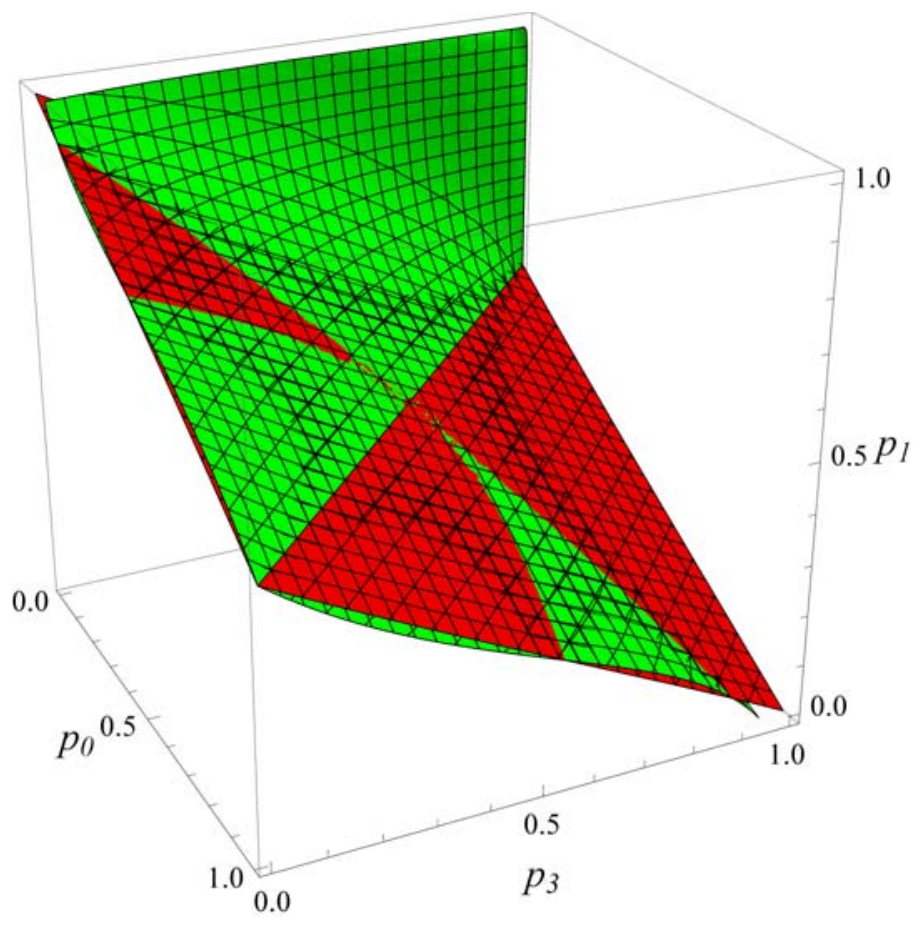}\;
\includegraphics[width = 1.5in]{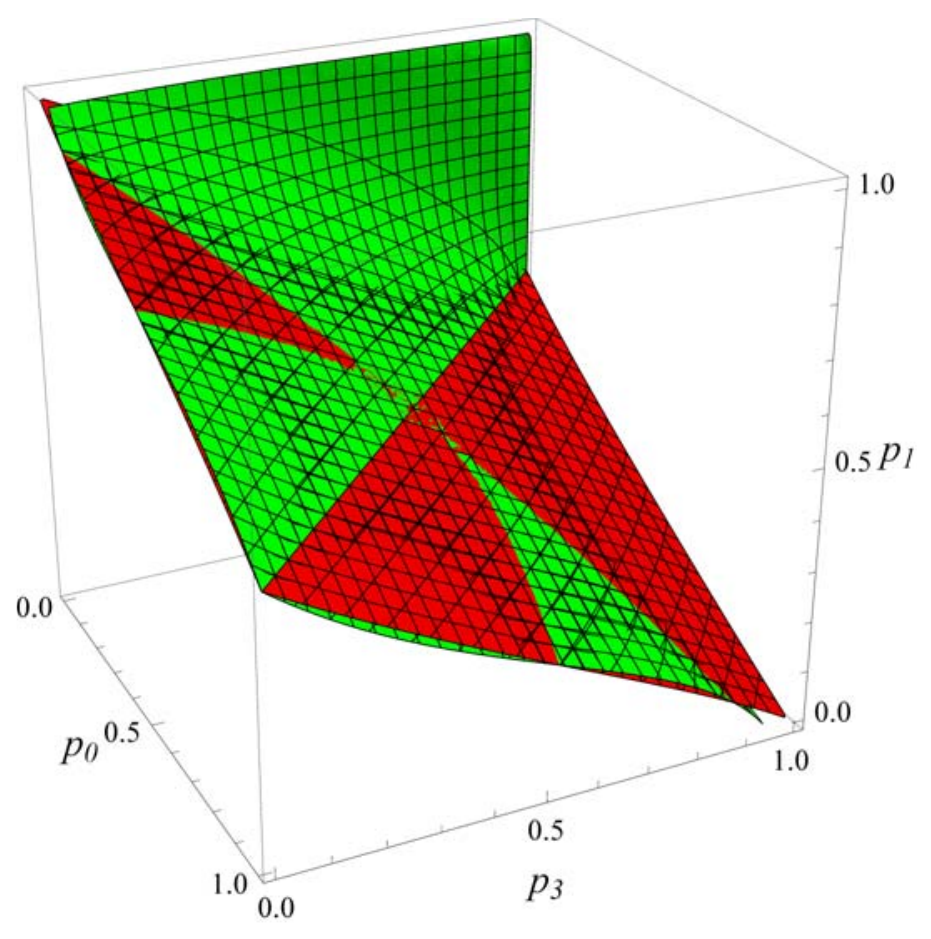}\;
\includegraphics[width = 1.5in]{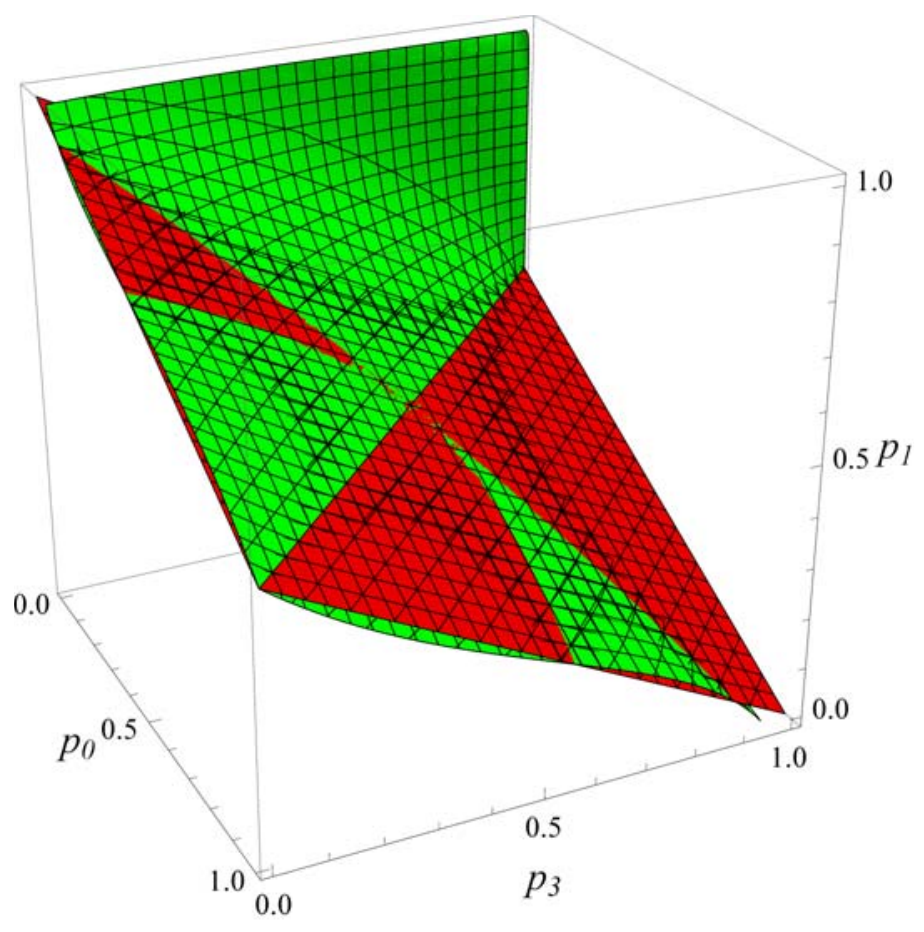}
\caption{\label{region_fig}When $N=3$ (row 1), $N=4$ (row 2), or $N=5$ (row 3), the green (or light) surface is the surface $\mu_B=0$, and the red (or dark) surface is the surface $\mu_{[r,s]}=0$ with $[r,s]=[1,1]$ (column 1), $[r,s]=[1,2]$ (column 2), or $[r,s]=[2,1]$ (column 3), all in the $(p_0,p_3,p_1)$ unit cube.  The Parrondo region is the region on or below the green surface and above the red surface, while the anti-Parrondo region is the region on or above the green surface and below the red surface.  (Assumption: $p_1=p_2$.)}
\end{figure}
\afterpage{\clearpage}

Table~\ref{ex1} displays the mean profits $\mu_{[r,s]}$ for $3\le N \le 18$ and the six choices of $[r,s]$ with $r+s\le 4$ for Toral's choice of the probability parameters $(p_0,p_1,p_2,p_3)=(1,4/25,4/25,7/10)$.  Computations were done using \textit{Mathematica 8} and a minor modification of the program displayed in Appendix A of \cite{EL12b}.  By Table~3 of \cite{EL12b}, $\mu_B<0$ for $3\le N \le 18$ except for $N=4,7,8$, so the Parrondo effect is present except in 22 of the 96 cases. Furthermore this calculation suggests that each mean $\mu_{[r,s]}$ converges to a limit as $N\to\infty$, although the rate of convergence is much slower than in the random-mixture case.

In fact, in \cite{EL12d} we proved that, under certain conditions (see Appendix A herein), $\mu_{[r,s]}$ converges to the same limit that $\mu_{(\gamma, 1-\gamma)}$, the mean profit of the randomly mixed game $\gamma A+(1-\gamma)B$, converges to, where $\gamma:=r/(r+s)$. The last row of Table~\ref{ex1} shows this limit. Tables~\ref{ex2} and \ref{ex3} analyze two other examples, a second point on the boundary of the unit cube and a point in the interior.  These are the same examples as the ones studied in \cite{EL12b}.

To illustrate the slower convergence rate, let us consider the example $p_0=1/10$, $p_1=p_2=3/5$, and $p_3=3/4$.  By $N=18$, $\mu_{[1,1]}$ matches its limiting value to only two significant digits.  On the other hand, by $N=19$ (the largest $N$ for which computations have been done in the random-mixture case), $\mu_B$ has stabilized to four significant digits and $\mu_{(1/2,1/2)}$ has stabilized to 11 significant digits.

For $N=18$, $[r,s]=[1,3]$, $[2,2]$, or $[3,1]$, and $p_0,p_1(=p_2),p_3\in(0,1)$, evaluation of the stationary distribution of $\bar{\bm P}_A^r\bar{\bm P}_B^s$ is computationally intensive because 32.4\% of the $59{,}059{,}225$ entries of this one-step transition matrix are nonzero. This is the reason we stopped with $N=18$. On the other hand, only 0.236\% of the entries of the one-step transition matrix $(r+s)^{-1}(r\bar{\bm P}_A+s\bar{\bm P}_B)$ are nonzero, so it is relatively easy to calculate its stationary distribution.  Furthermore, the faster convergence in the random-mixture case allows us to obtain six significant digits of $\lim_{N\to\infty}\mu_{(r/(r+s),s/(r+s))}$ using only the computed values for $N\le18$.

\begin{table}[ht]
\caption{\label{ex1}Assuming $(p_0,p_1,p_2,p_3)=(1,4/25,4/25,7/10)$, entries give $\mu_{[r,s]}$ to six significant digits
(in most cases) for $3\le N\le 18$ and six choices of $[r,s]$.  By \cite{EL12b}, $\mu_B<0$ for $3\le N\le18$ except for $N=4,7,8$, so the Parrondo effect is present except in 22 of the 96 ($=16\times6$) cases.  Also given, in the last row, is $\lim_{N\to\infty}\mu_{[r,s]}$ to six significant digits.\medskip}
\catcode`@=\active \def@{\hphantom{0}}
\catcode`#=\active \def#{\hphantom{$-$}}
\tabcolsep=.12cm
\begin{center}
\begin{footnotesize}
\begin{tabular}{ccccccc}
\noalign{\smallskip}
 $N$ & \multicolumn{6}{c}{$[r,s]$} \\
\noalign{\smallskip}
\hline
\noalign{\smallskip}
 & $[1,1]$ & $[1,2]$ & $[1,3]$ & $[2,1]$ & $[2,2]$ & $[3,1]$ \\
\noalign{\smallskip}
\hline
\noalign{\smallskip}
@3 & $-0.00695879$ & $-0.02748212$ & $-0.04021572$ & #0.00067249 & $-0.01487182$ & #0.00179203 \\  
@4 & #0.00877041 & #0.02345828 & #0.03569464 & #0.00352220 & #0.01011944 & #0.00244238 \\
@5 & #0.00466232 & #0.00501198 & #0.00434917 & #0.00320648 & #0.00465517 & #0.00240873 \\
@6 & #0.00497503 & #0.00590528 & #0.00513509 & #0.00325099 & #0.00498178 & #0.00241857 \\
@7 & #0.00496767 & #0.00621483 & #0.00637676 & #0.00326314 & #0.00497331 & #0.00242540 \\
@8 & #0.00494802 & #0.00604194 & #0.00599064 & #0.00327193 & #0.00495138 & #0.00243115 \\
@9 & #0.00493507 & #0.00598135 & #0.00588386 & #0.00327802 & #0.00493728 & #0.00243582 \\
10 & #0.00492347 & #0.00593756 & #0.00584200 & #0.00328237 & #0.00492494 & #0.00243961 \\
11 & #0.00491339 & #0.00589846 & #0.00578690 & #0.00328558 & #0.00491438 & #0.00244272 \\
12 & #0.00490464 & #0.00586697 & #0.00574489 & #0.00328800 & #0.00490531 & #0.00244529 \\
13 & #0.00489699 & #0.00584063 & #0.00571065 & #0.00328986 & #0.00489744 & #0.00244745 \\
14 & #0.00489026 & #0.00581820 & #0.00568131 & #0.00329133 & #0.00489056 & #0.00244927 \\
15 & #0.00488431 & #0.00579891 & #0.00565623 & #0.00329249 & #0.00488449 & #0.00245083 \\
16 & #0.00487900 & #0.00578213 & #0.00563452 & #0.00329343 & #0.00487912 & #0.00245217 \\
17 & #0.00487426 & #0.00576740 & #0.00561552 & #0.00329420 & #0.00487432 & #0.00245334 \\
18 & #0.00486999 & #0.00575438 & #0.00559876 & #0.00329483 & #0.00487001 & #0.00245437 \\
\noalign{\medskip}
$\infty$ & #0.00479089 & #0.00554084 & #0.00532972 & #0.00329853 & #0.00479089 & #0.00246903 \\
\noalign{\smallskip}
\hline
\end{tabular}
\end{footnotesize}
\end{center}
\end{table}

\begin{table}[ht]
\caption{\label{ex2}Assuming $(p_0,p_1,p_2,p_3)=(7/10,17/25,17/25,0)$, entries give $\mu_{[r,s]}$ to six significant digits (in most cases) for $3\le N\le18$ and six choices of $[r,s]$.   By \cite{EL12b}, $\mu_B<0$ for $3\le N\le18$ except for $N=3,5$, so the Parrondo effect is present except in 13 of the 96 cases.\medskip}
\catcode`@=\active \def@{\hphantom{0}}
\catcode`#=\active \def#{\hphantom{$-$}}
\tabcolsep=.12cm
\begin{center}
\begin{footnotesize}
\begin{tabular}{ccccccc}
\noalign{\smallskip}
 $N$ & \multicolumn{6}{c}{$[r,s]$} \\
\noalign{\smallskip}
\hline
\noalign{\smallskip}
 & $[1,1]$ & $[1,2]$ & $[1,3]$ & $[2,1]$ & $[2,2]$ & $[3,1]$ \\
\noalign{\smallskip}
\hline
\noalign{\smallskip}
@3 & #0.02224875 & #0.04081923 & #0.04914477 & #0.01124438 & #0.02978942 & #0.00756605 \\
@4 & #0.00771486 & #0.00053987 & $-0.00667489$ & #0.00804643 & #0.00719297 & #0.00668482 \\  
@5 & #0.00959590 & #0.00896406 & #0.00788233 & #0.00808935 & #0.00944220 & #0.00660145 \\
@6 & #0.00883530 & #0.00657582 & #0.00395457 & #0.00787821 & #0.00870056 & #0.00649756 \\
@7 & #0.00866607 & #0.00674014 & #0.00456448 & #0.00774894 & #0.00856648 & #0.00641956 \\
@8 & #0.00847909 & #0.00646246 & #0.00421207 & #0.00764628 & #0.00840151 & #0.00635716 \\
@9 & #0.00834684 & #0.00633835 & #0.00413293 & #0.00756557 & #0.00828507 & #0.00630657 \\
10 & #0.00824074 & #0.00622264 & #0.00402604 & #0.00750030 & #0.00819047 & #0.00626485 \\
11 & #0.00815521 & #0.00613328 & #0.00394982 & #0.00744652 & #0.00811356 & #0.00622992 \\
12 & #0.00808457 & #0.00605892 & #0.00388449 & #0.00740147 & #0.00804952 & #0.00620028 \\
13 & #0.00802529 & #0.00599680 & #0.00383017 & #0.00736319 & #0.00799540 & #0.00617485 \\
14 & #0.00797483 & #0.00594398 & #0.00378377 & #0.00733029 & #0.00794905 & #0.00615279 \\
15 & #0.00793135 & #0.00589853 & #0.00374381 & #0.00730171 & #0.00790890 & #0.00613348 \\
16 & #0.00789351 & #0.00585901 & #0.00370899 & #0.00727665 & #0.00787378 & #0.00611645 \\
17 & #0.00786027 & #0.00582433 & #0.00367839 & #0.00725451 & #0.00784280 & #0.00610132 \\
18 & #0.00783085 & #0.00579365 & #0.00365129 & #0.00723480 & #0.00781528 & #0.00608779 \\
\noalign{\medskip}
$\infty$ & #0.00734784 & #0.00529172 & #0.00320388 & #0.00689768 & #0.00734784 & #0.00584750 \\
\noalign{\smallskip}
\hline
\end{tabular}
\end{footnotesize}
\end{center}
\end{table}

\begin{table}[ht]
\caption{\label{ex3}Assuming $(p_0,p_1,p_2,p_3)=(1/10,3/5,3/5,3/4)$, entries give $\mu_{[r,s]}$ to six significant digits (in most cases) for $3\le N\le18$ and six choices of $[r,s]$.  By \cite{EL12b}, $\mu_B<0$ for $3\le N\le18$, so the Parrondo effect is present except in 2 of the 96 cases.\medskip}
\catcode`@=\active \def@{\hphantom{0}}
\catcode`#=\active \def#{\hphantom{$-$}}
\tabcolsep=.12cm
\begin{center}
\begin{footnotesize}
\begin{tabular}{ccccccc}
\noalign{\smallskip}
 $N$ & \multicolumn{6}{c}{$[r,s]$} \\
\noalign{\smallskip}
\hline
\noalign{\smallskip}
 & $[1,1]$ & $[1,2]$ & $[1,3]$ & $[2,1]$ & $[2,2]$ & $[3,1]$ \\
\noalign{\smallskip}
\hline
\noalign{\smallskip}
@3 & #0.0064599 & $-0.0113380$ & $-0.0305982$ & #0.00792562 & #0.0013604 & #0.00640450 \\ 
@4 & #0.0143185 & #0.0137926 & #0.0079878 & #0.00959254 & #0.0141773 & #0.00690030 \\ 
@5 & #0.0154167 & #0.0187545 & #0.0174399 & #0.00977539 & #0.0153785 & #0.00699045 \\
@6 & #0.0155020 & #0.0197053 & #0.0197681 & #0.00980464 & #0.0154761 & #0.00703029 \\
@7 & #0.0154423 & #0.0197654 & #0.0202208 & #0.00980993 & #0.0154198 & #0.00705342 \\
@8 & #0.0153749 & #0.0196547 & #0.0201990 & #0.00980937 & #0.0153555 & #0.00706817 \\
@9 & #0.0153182 & #0.0195309 & #0.0200777 & #0.00980683 & #0.0153015 & #0.00707811 \\
10 & #0.0152717 & #0.0194237 & #0.0199518 & #0.00980357 & #0.0152575 & #0.00708508 \\
11 & #0.0152334 & #0.0193346 & #0.0198414 & #0.00980014 & #0.0152212 & #0.00709014 \\
12 & #0.0152014 & #0.0192605 & #0.0197479 & #0.00979677 & #0.0151908 & #0.00709392 \\
13 & #0.0151742 & #0.0191982 & #0.0196688 & #0.00979356 & #0.0151649 & #0.00709679 \\
14 & #0.0151508 & #0.0191452 & #0.0196013 & #0.00979055 & #0.0151427 & #0.00709903 \\
15 & #0.0151306 & #0.0190994 & #0.0195432 & #0.00978777 & #0.0151233 & #0.00710080 \\
16 & #0.0151128 & #0.0190596 & #0.0194927 & #0.00978519 & #0.0151064 & #0.00710221 \\
17 & #0.0150971 & #0.0190246 & #0.0194483 & #0.00978280 & #0.0150913 & #0.00710337 \\
18 & #0.0150831 & #0.0189936 & #0.0194090 & #0.00978060 & #0.0150779 & #0.00710431 \\
\noalign{\medskip}
$\infty$ & #0.0148448 & #0.0184829 & #0.0187645 & #0.00973219 & #0.0148448 & #0.00710942 \\
\noalign{\smallskip}
\hline
\end{tabular}
\end{footnotesize}
\end{center}
\end{table}

\section{Conclusions}

We considered the spatially dependent, or cooperative, Parrondo games of Toral \cite{T01}, which assume $N\ge3$ players arranged in a circle, and in which the win probability for a player depends on the status of the player's two nearest neighbors.  There are three games, game $A$ without spatial dependence, game $B$ with spatial dependence, and the nonrandom periodic pattern $A^rB^s$, or $[r,s]$, with $r,s\ge 1$.  The model is described by a Markov chain with $2^N$ states and parameters $p_0,p_1,p_2,p_3$.  To maximize the value of $N$ for which exact computations are feasible, we assumed $p_1=p_2$ and regarded states as equivalent if they are equal after a rotation and/or reflection of the players.  This allowed us to compute the mean profits per turn, $\mu_{[r,s]}$, to the ensemble of $N$ players for $3\le N\le18$ and $r+s\le4$ and several choices of the parameter vector $(p_0,p_1,p_3)$, including Toral's choice.  The results provide numerical evidence, but not a proof, that $\mu_{[r,s]}$ converges as $N\to\infty$ and that the Parrondo effect (i.e., $\mu_B\le0$ and $\mu_{[r,s]}>0$) persists for all $N$ sufficiently large for a set of parameter vectors having nonzero volume.  As we noted in \cite{EL12b}, this suggests that the spatially-dependent version of Parrondo's paradox is a robust phenomenon that remains present in the thermodynamic limit.

This is the main conclusion, but there are several other noteworthy findings.  We have shown that the sequence of profits to the ensemble of $N$ players obeys the strong law of large numbers.  This is relevant to defining what is meant by a winning, losing, and fair game.  For $N=3,4,5$ and $r+s\le3$, explicit formulas for $\mu_B$ and $\mu_{[r,s]}$ are available, so with the help of computer graphics, we have demonstrated that one can visualize the Parrondo region, the region in the three-dimensional parameter space in which the Parrondo effect appears.  There is also an anti-Parrondo region, and we have shown that it is symmetric with the Parrondo region, as might be expected.

We have restricted our attention to the one-dimensional version of the model, but our methods may have applicability to the two-dimensional version, already investigated by Mihailovi\'c and Rajkovi\'c \cite{MR06} using computer simulation.  In a separate paper \cite{EL12d} we prove, under conditions that can likely be improved (see Appendix A below), the convergence of $\mu_{[r,s]}$ as $N\to\infty$, and we express the limit in terms of a spin system on the one-dimensional integer lattice.

\section*{Acknowledgments}

The work of S. N. Ethier was partially supported by a grant from the Simons Foundation (209632).  It was also supported by a Korean Federation of Science and Technology Societies grant funded by the Korean Government (MEST, Basic Research Promotion Fund).  The work of J. Lee was supported by the Basic Science Research Program through the National Research Foundation of Korea (NRF) funded by the Ministry of Education, Science and Technology (2012-0004434).

\section*{Appendix A}

We have stated (referring to \cite{EL12c} and \cite{EL12d} for details) that, under certain conditions on $p_0,p_1,p_2,p_3$ and $[r,s]$, the means $\mu_B$ and $\mu_{[r,s]}$ converge as $N\to\infty$.  We would like to briefly describe the conditions and identify the limits.

The limits depend on the spin system in the infinite product space $\{0,1\}^{\bf Z}$ (${\bf Z}$ being the set of integers) with generator of the form
\begin{equation}\label{generator}
(\mathscr{L}_Bf)(\bm x):=\sum_{i\in{\bf Z}}c_i(\bm x)[f(\bm x^i)-f(\bm x)],
\end{equation}
where
\begin{equation}\label{flip rates}
c_i(\bm x):=\begin{cases}p_{m_i(\bm x)}&\text{if $x_i=0$,}\\ q_{m_i(\bm x)}&\text{if $x_i=1$,}\end{cases}
\end{equation}
and
$m_i(\bm x):=2x_{i-1}+x_{i+1}$ and $q_m:=1-p_m$ for $m=0,1,2,3$.  Intuitively, a ``spin,'' 0 or 1, is associated with each ``site'' $i\in{\bf Z}$, and the spin is ``flipped'' at site $i$ at exponential rate $c_i(\bm x)$.  If the sum in (\ref{generator}) were finite, this would define a pure-jump Markov process, but because the sum is infinite, it defines a Markov process with instantaneous states.  See Liggett \cite{L85} for a thorough account of spin systems.

We have shown in \cite{EL12c} that the spin system with generator $\mathscr{L}_B$ is ergodic (i.e., it has a unique stationary distribution and the distribution at time $t$ converges weakly to that stationary distribution as $t\to\infty$, regardless of the initial distribution) if at least one of the following four conditions is satisfied:
\begin{equation}\label{cond-a}
\max(|p_0-p_1|,|p_2-p_3|)+\max(|p_0-p_2|,|p_1-p_3|)<1;
\end{equation}
\begin{equation}\label{cond-b}
0<\min(p_0,p_3)\le \min(p_1,p_2)\le\max(p_1,p_2)\le \max(p_0,p_3)<1;
\end{equation}
\begin{equation}\label{cond-c}
\;\;\max(p_1,p_2,p_3,p_1+p_2-p_3)-p_3<p_0/2<\min(p_1,p_2,p_3,p_1+p_2-p_3);
\end{equation}
\begin{equation}\label{cond-d}
p_0,p_1,p_2,p_3\in(2\overline{p}-1,2\overline{p})\cap(0,1),\quad \overline{p}:=(p_0+p_1+p_2+p_3)/4.
\end{equation}
Under the assumption that $p_1=p_2$, the (three-dimensional) volumes of the regions described by (\ref{cond-a})--(\ref{cond-d}) are respectively 7/12, 1/3, 7/32, and 2/3.  The volume of the union of these four regions is 3323/4032, representing about 82.4\% of the parameter space.  Of our three examples (Tables \ref{ex1}--\ref{ex3}), only the third one belongs to this union.

It is shown in \cite{EL12c} that, if the spin system with generator $\mathscr{L}_B$ is ergodic with unique stationary distribution $\pi_B$, then the mean $\mu_B$ converges as $N\to\infty$ to
\begin{equation}\label{limit}
2\sum_{u=0}^1\sum_{v=0}^1(\pi_B)_{-1,1}(u,v)p_{2u+v}-1,
\end{equation}
where $(\pi_B)_{-1,1}$ denotes the $-1,1$ two-dimensional marginal of $\pi_B$.

If $p_0=p_1=p_2=p_3=1/2$, we denote $\mathscr{L}_B$ by $\mathscr{L}_A$.  A similar argument shows that, if $0<\gamma<1$ and the spin system with generator $\gamma\mathscr{L}_A+(1-\gamma)\mathscr{L}_B$ is ergodic with unique stationary distribution $\pi_C$, then, for the random mixture $C:=\gamma A+(1-\gamma)B$, the mean $\mu_C$ converges as $N\to\infty$ to (\ref{limit}) but with $\pi_B$ replaced by $\pi_C$ and each $p_m$ replaced by $\hat{p}_m:=\gamma(1/2)+(1-\gamma)p_m$.  Notice that $\gamma\mathscr{L}_A+(1-\gamma)\mathscr{L}_B$ is just (\ref{generator}) (with (\ref{flip rates})) but with each $p_m$ replaced by $\hat{p}_m$ (and each $q_m$ replaced by $\hat{q}_m:=1-\hat{p}_m$), so the ergodicity condition holds if at least one of (\ref{cond-a})--(\ref{cond-d}) holds with each $p_m$ replaced by $\hat{p}_m$.

Finally, it is shown in \cite{EL12d} that, if the assumption of the preceding paragraph holds with $\gamma:=r/(r+s)$, then the mean $\mu_{[r,s]}$ converges as $N\to\infty$ to the limit of the preceding paragraph with $\gamma:=r/(r+s)$.

\end{document}